\documentclass[11pt]{amsart}
\usepackage[arrow,matrix,graph,frame,poly,arc,tips]{xy}
\usepackage{amscd,amssymb,subfigure,epsfig,color}
\usepackage[dvipdfm]{hyperref}
\usepackage[width=5.65in,height=8.1in,centering]{geometry}
\usepackage{amsmath}
\usepackage{eepic}
\usepackage{fancyvrb}

\theoremstyle{plain}
\newtheorem{theorem}[subsection]{Theorem}

\newtheorem{lemma}[subsection]{Lemma}
\newtheorem{proposition}[subsection]{Proposition}
\newtheorem{corollary}[subsection]{Corollary}

\theoremstyle{definition}
\newtheorem{remark}[subsection]{Remark}
\newtheorem{definition}[subsection]{Definition}
\newtheorem{example}[subsection]{Example}

\newtheorem*{ack}{Acknowledgment}

\numberwithin{equation}{section}

\newcommand{\abs}[1]{{\lvert #1 \rvert}}
\newcommand{\set}[1]{{\left\{ #1 \right\}}}

\newcommand{\A}{{\mathcal A}}
\newcommand{\B}{{\mathcal B}}
\newcommand{\cC}{{\mathcal C}}

\newcommand{\cF}{{\mathcal F}}

\newcommand{\FF}{{\mathbf F}} 
\newcommand{\XX}{{\mathbf X}} 

\def\cprime{$'$}

\newcommand{\R}{{\mathbb R}}

\newcommand{\res}{{\rm res}}

\DeclareMathOperator{\flag}{Flag}
\DeclareMathOperator{\Fl}{Fl}

\DeclareMathOperator{\gr}{gr}

\def\px{{\mathchar"002B}} 
\def\mx{{\mathchar"0200}} 
\def\lx{{\mathchar"013C}} 
\def\vareps{{\varepsilon}}

\title{Eigenvectors for a random walk on a hyperplane arrangement}
\date{\today}
\author[G. Denham]{Graham Denham$^1$}
\address{Department of Mathematics, University of Western Ontario\\
London, ON  N6A 5B7, Canada}
\urladdr{\href{http://www.math.uwo.ca/~gdenham}%
{http://www.math.uwo.ca/\char'176gdenham}}
\thanks{{$^1$}Partially supported by NSERC of Canada and the Swiss National
Science Foundation}
\keywords{hyperplane arrangement, random walk, oriented matroid}
\subjclass[2000]{Primary
52C35.  
Secondary
60J10.  
}
\begin{document}
\begin{abstract}
We find explicit eigenvectors for the transition matrix of the 
Bidegare-Hanlon-Rockmore random walk, from \cite{BHR99}.  This is
accomplished by using Brown and Diaconis' analysis in \cite{BD98}
of the stationary distribution, together with some combinatorics
of functions on the face lattice of a hyperplane arrangement, due 
to Gel\cprime fand and Varchenko~\cite{VG87}.
\end{abstract}
\maketitle
\section{Introduction}
In 1999, Bidegare, Hanlon and Rockmore~\cite{BHR99} introduced a 
simultaneous generalization of several well-studied discrete Markov chains.
Let $\A$ be an arrangement of $n$ linear
hyperplanes in $W=\R^\ell$, and let $\cC$ denote the set of chambers:
i.e., the connected components of the complement of $\A$ in $W$. 
They construct a random walk on $\cC$, which we will follow \cite{BD98}
in calling the BHR random walk, by means of the face product,
defined below.  By choosing the hyperplane arrangement suitably, 
one obtains as special cases the Tsetlin library (``move-to-front rule''), 
Ehrenfests' urn, and various card-shuffling models~\cite{BHR99,BD98}.

A key insight in this construction and main result of \cite{BHR99}
is that the eigenvalues of the transition matrix 
can be expressed simply in terms of the combinatorics
of the arrangement $\A$.  This is useful for bounding the rate of 
convergence to the stationary distribution: Brown and Diaconis
make this analysis in \cite{BD98}, show that the transition matrix is
diagonalizable, and give an explicit
description of the random walk's stationary distribution.

Subsequently Brown~\cite{Br00} generalized the BHR random walk
to the setting of certain semigroup algebras, showing in particular
that the diagonalization result held there as well.  He 
describes projection operators onto each eigenspace, which in principle
provides a description of the eigenvectors of the random walk's 
transition matrix.  However, the general expression is necessarily somewhat
complicated.

The purpose of this paper, then, is to provide a relatively straightforward
description of the original BHR random walk's eigenvectors.  It turns out
that the combinatorial Heaviside functions of Gel\cprime fand and 
Varchenko~\cite{VG87} behave well with respect to the face product and
the BHR random walk, so we highlight their role in this problem.
Using the description of the stationary distribution from \cite{BD98},
the main result here, Theorem~\ref{th:main}, describes a spanning set for
each eigenspace in terms of flags in the intersection lattice.
\section{Background and notation}
\subsection{The face algebra}\label{ss:face}
Let $\A$ be an arrangement of $n$ hyperplanes in $\R^\ell$.  
We will assume throughout that $\A$ is central and essential: that
is, the intersection of the hyperplanes $\A$ equals the origin in $\R^\ell$.
We will follow the notational conventions of the standard reference
for hyperplane arrangements, \cite{OTbook}.  In particular, let $L(\A)$ 
denote the lattice of intersections, ordered by reverse inclusion, and
$\cF=\cF(\A)$ the face semilattice, also ordered by reverse inclusion.
For $0\leq p\leq \ell$, let $L_p(\A)$ and $\cF_p(\A)$ denote the subspaces
and faces, respectively, of codimension $p$.  In particular, the set of
chambers $\cC=\cC(\A)=\cF_0(\A)$.

For a face $F\in\cF$, let $\abs{F}$ denote the smallest subspace in 
$L(\A)$ containing $F$.  Recall that $\A^X$ denotes the arrangement 
in $X$ of hyperplanes $\set{H\cap X\colon H\in\A,X\not\subseteq H}$,
whenever $X\in L(\A)$, and $\A_X$ denotes the subarrangement
$\set{H\in\A\colon X\subseteq H}$.  We will identify faces (and chambers) of
$\A^X$ with faces $F\in\cF(\A)$ for which $\abs{F}\geq X$ (and
$\abs{F}=X$, respectively):
\begin{equation}\label{eq:loc}
\cF(\A^X)\cong \set{F\in\cF(\A)\colon\abs{F}\geq X}.
\end{equation}
For hyperplanes $H\in \A$, let $F_H$ be $0,\pm 1$
depending on 
whether $F$ is contained in $H$, on the positive side, or the 
negative side, respectively.  We will abbreviate the values of $F_H$ with
$0,\px,\mx$.  A face $F$ is uniquely determined
by the sign sequence $(F_H)_{H\in\A}$.

We recall the face product of two faces $F$ and $G$
can be described by its sign sequence:
\begin{equation}\label{eq:vprod}
(FG)_H=\begin{cases}
F_H & \text{if $F_H\neq0$;}\\
G_H & \text{otherwise.}
\end{cases}
\end{equation}
From the definition, $FF=F$ for any $F$, and $FGF=FG$ for any faces $F,G$
(the ``deletion property'': see \cite{Br00}.)

For a fixed arrangement $\A$, let $A=R[\cF]$ denote its face algebra, 
introduced in \cite{BD98}.  Additively, $A$ is the free $R$-module on
$\cF$, and multiplication is given by linearly extending the face product.
Following \cite{BD98}, let $V(\A)$ denote the free $R$-module on $\cC(\A)$:
then the face product makes $V(\A)$ a left $A$-module.  More generally,
following \cite[Section~5C]{BD98}, we can make $V(\A^X)$ a left $A$-module
for any $X\in L(\A)$: for $F\in\cF$ and $G\in\cC(\A^X)$, set
\begin{equation}\label{eq:localaction}
FG=\begin{cases}
FG & \text{if $\abs{F}\geq X$;}\\
0 & \text{otherwise.}
\end{cases}
\end{equation}

\subsection{The zonotope of a real arrangement}
\begin{figure}
\includegraphics[height=1.5in]{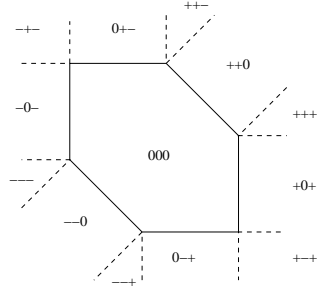}
\caption{Zonotope for $f_1=x$, $f_2=y$, $f_3=x-y$.}\label{fig:one}
\end{figure}
As Brown and Diaconis~\cite{BD98} observe, 
it is useful in this setting
to consider the zonotope of a real arrangement.  Let
$f_H$ denote a linear equation defining the hyperplane $H$, for each
$H\in\A$.  The zonotope $Z_\A$ is, by definition, the Minkowski sum
of intervals,
\[
Z_\A=\sum_{H\in\A}[-f_H,f_H].
\]
This is a polytope in $W^*$: we refer to \cite{Ziebook} for details.
Its most relevant feature here
is that the face poset of $Z_\A$ (ordered by inclusion)
is isomorphic to $\cF(\A)$: the isomorphism arises by identifying the
outer normal fan of the zonotope with the arrangement.  If $F$ is a 
face of $\A$, then, let $\hat{F}$ denote the corresponding face of the
zonotope.  Each face $\hat{F}$ is an affine translate of a
zonotope of a closed subarrangement: for $F\in\cF$, let $X=\abs{F}$ and
$v_F=\sum_{H\in\A}(F_H)f_H$.  Then $\hat{F}=v_F+Z_{\A_X}$: 
see, for example, discussion in \cite{McM71}.

For the sake of intuition, we indicate one way to visualize the face
product in the zonotope world.
\begin{definition}\label{def:retract}
Suppose $P$ is a polytope in $W^*$ and $v\in W^*$ is nonzero.  Define a
map $r_{P,v}\colon P+[-v,v]\to P+v$ as follows.  If $p\in P+[-v,v]$,
let $\lambda$ be maximal for which $p=x+\lambda v$, where $x\in P$ 
and $\lambda\leq 1$.  Set $r_{P,v}(p)=x+v$.
Clearly $r_{P,v}$ is piecewise-linear and fixes $P+v$ pointwise.
\end{definition}
\begin{proposition}\label{prop:Zprod}
For each face $F$ of $\A$, there is a piecewise-linear retract $p_F$
of $Z_\A$ onto $\hat{F}$, with $p_F(\hat{G})=\widehat{FG}$, for all
$G\in \cF(\A)$.  
\end{proposition}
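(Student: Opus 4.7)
My plan is to build $p_F$ as an iterated composition of the elementary retractions from Definition~\ref{def:retract}, pushing $Z_\A$ onto $\hat F$ one hyperplane at a time.

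Set $X=\abs{F}$ and enumerate $\A\setminus\A_X=\{H_1,\ldots,H_k\}$; these are exactly the hyperplanes on which $F$ has a nonzero sign. Put $v_i=F_{H_i}f_{H_i}$, so that $v_F=v_1+\cdots+v_k$ and $\hat F=v_F+Z_{\A_X}$. For $0\leq i\leq k$, set
\[
P_i=\sum_{j\leq i}v_j+Z_{\A_X}+\sum_{j>i}[-f_{H_j},f_{H_j}]
\]
and $Q_i=P_i-v_i$, so that $P_0=Z_\A$, $P_k=\hat F$, and $P_{i-1}=Q_i+[-v_i,v_i]$. Each $r_i:=r_{Q_i,v_i}$ is a PL retract of $P_{i-1}$ onto $P_i$, and the composite $p_F:=r_k\circ\cdots\circ r_1$ is a PL retract of $Z_\A$ onto $\hat F$ fixing $\hat F$ pointwise.

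To prove $p_F(\hat G)=\widehat{FG}$, I would argue by induction on $i$ using the claim that $P_i$ is a translate of the zonotope of the subarrangement $\A_i:=\A\setminus\{H_1,\ldots,H_i\}$, and that the image of $\hat G$ after the first $i$ retractions is the translate inside $P_i$ of the face of $Z_{\A_i}$ indexed by the restriction of $G$'s sign sequence to $\A_i$. (Any face of $\A$ restricts to a face of any subarrangement, so this makes sense.) At $i=k$ the subarrangement is $\A_X$; translating by $v_F$ and unpacking definitions produces
\[
v_F+\sum_{H\in\A_X,\,G_H\neq 0}G_Hf_H+Z_{\A_X\cap\A_{\abs{G}}},
\]
which equals $\widehat{FG}$ by~\eqref{eq:vprod} together with the easy identity $\A_{\abs{FG}}=\A_X\cap\A_{\abs{G}}$ (both sides consist of the $H\in\A$ with $(FG)_H=0$).

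The inductive step reduces to a single-retraction claim: for any face $\widehat{G'}\subseteq P_{i-1}$, the image $r_i(\widehat{G'})$ is the face of $P_i$ whose sign sequence on $\A_i$ agrees with that of $G'$ restricted to $\A_i$. This is verified by a case analysis on $G'_{H_i}\in\{F_{H_i},-F_{H_i},0\}$, using the Minkowski decomposition $\widehat{G'}=(\text{face of }[-v_i,v_i])+(\text{face of }Q_i)$ coming from $P_{i-1}=Q_i+[-v_i,v_i]$. When $G'_{H_i}=F_{H_i}$, $\widehat{G'}\subseteq P_i$ and $r_i$ fixes it; when $G'_{H_i}=-F_{H_i}$, $\widehat{G'}\subseteq Q_i-v_i$ and $r_i$ carries it onto the corresponding face of $P_i=Q_i+v_i$; when $G'_{H_i}=0$, $\widehat{G'}$ contains $[-v_i,v_i]$ as a Minkowski summand and $r_i$ collapses this summand to the endpoint $v_i$. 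I expect the last case to be the main obstacle: one has to verify that Definition~\ref{def:retract} applied pointwise to $\widehat{G'}$ yields precisely this collapse, which amounts to showing that the maximal $\lambda$ in the decomposition $p=x+\lambda v_i$ absorbs the $[-v_i,v_i]$-coordinate of $p$ independently of any additional freedom in the Minkowski parametrization of $Q_i$.
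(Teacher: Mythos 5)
Your construction of $p_F$ is the same one the paper uses: factor $p_F$ as a composite of elementary retractions from Definition~\ref{def:retract}, one for each hyperplane $H$ with $F_H\neq 0$, via the Minkowski decomposition $Z_\A=Z_{\A_X}+\sum_{F_H\neq0}[-f_H,f_H]$ (you index the intermediate polytopes $P_i$ in the opposite direction, but that is cosmetic). Where you differ is in how you verify $p_F(\hat G)=\widehat{FG}$. The paper first evaluates $r_{P_{i-1}-v_i,v_i}$ only on the vertices $v_S$, $S\in(\pm1)^{\A}$, where there is no Minkowski ambiguity and the one-line formula $r(v_S)=v_{S'}$ with $S'_{H_i}=F_{H_i}$ is immediate from Definition~\ref{def:retract}; it then deduces the statement for an arbitrary face $\hat G$ by writing $\hat G$ as the convex hull of the vertices $\hat C$ with $C\leq G$. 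You instead propose an induction on $i$ tracking faces of the intermediate zonotopes $P_i$, with a case analysis on $G'_{H_i}$.

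You are right that the only delicate case is $G'_{H_i}=0$, but the obstacle you flag there is more serious than you suggest. When $G'_{H_i}=0$ one has $\widehat{G'}=B+[-v_i,v_i]$ for a face $B$ of $Q_i$, and if $v_i$ lies in the affine span of $B$ (which happens precisely when the edge directions of $B$ span $v_i$), the parametrization $p=b+\mu v_i$ with $b\in B$, $\mu\in[-1,1]$ is \emph{not} unique, $r_i$ is not affine on $\widehat{G'}$, and it is no longer a matter of ``the maximal $\lambda$ absorbing the $[-v_i,v_i]$-coordinate.'' One still gets $r_i(\widehat{G'})=B+v_i$, but establishing that requires an argument about the fibers of the projection along $v_i$ rather than a coordinate-by-coordinate collapse. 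Your remaining side claims (that $P_i$ is a translate of $Z_{\A_i}$, and that $\A_{\abs{FG}}=\A_X\cap\A_{\abs{G}}$) are correct. To finish cleanly, you may want to adopt the paper's route: settle the vertex case, where the computation is trivial, and then pass to general faces by convexity.
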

\begin{proof}
We will write $p_F$ as the composition of the retracts from 
Definition~\ref{def:retract}, 
one for each hyperplane of $\A$ not containing $F$.  

Again, let $X=\abs{F}$.  Let $S_1=\set{f_H\colon F_H=0}$ and
$S_2=\set{(F_H)f_H\colon F_H\neq 0}$.  Then 
\begin{eqnarray*}
Z_\A&=&\sum_{v\in S_1}[-v,v]+\sum_{v\in S_2}[-v,v]\\
&=&Z_{\A_X}+\sum_{v\in S_2}[-v,v],
\end{eqnarray*}
and $v_F=\sum_{v\in S_2}v$.  Put the elements of $S_2$ in any order,
writing $S_2=\set{v_1,v_2,\ldots,v_k}$.  Let $P_0=Z_{\A_X}+v_F$, and let
$P_i=P_{i-1}+[-2v_i,0]$ for $1\leq i\leq k$.  
Then 
\[
\hat{F}=P_0\subseteq P_1\subseteq\cdots\subseteq P_k=Z_\A,
\]
and the following composite collapses along
the vectors in $S_2$, one at a time:
\[
p_F=r_{P_0-v_1,v_1}\circ r_{P_1-v_2,v_2}\circ\cdots\circ r_{P_{k-1}-v_k,v_k}
\colon Z_\A\to \hat{F}.
\]
By construction, this is a piecewise-linear retract onto $\hat{F}$.  

To see that $p_F(\hat{G})=\widehat{FG}$ for any face $G$, let $S\in(\pm1)^\A$
be an arbitrary sequence of nonzero signs, 
and define $v_S=\sum_{H\in\A}S_H f_H$.  Then
$
r_{P_{i-1}-v_i,v_i}(v_S)=v_{S'},
$
where 
\[
(S')_H=\begin{cases} S_H&\text{if $f_H\neq \pm v_i$;}\\
c & \text{if $f_H=c v_i$.}\end{cases}
\]
It follows that $p_F(\hat{C})=\widehat{FC}$ for chambers $C\in\cC(\A)$.
By writing an arbitrary face $\hat{G}$ as the convex hull of the 
vertices $\hat{C}$
for which $C\leq G$, one then obtains $p_F(\hat{G})=\widehat{FG}$ for all $G$.
\end{proof}
Note that the map $p_F$ is not uniquely defined, and depends on a choice of
order.  See Figure~\ref{fig:retracts} for examples.
\begin{figure}
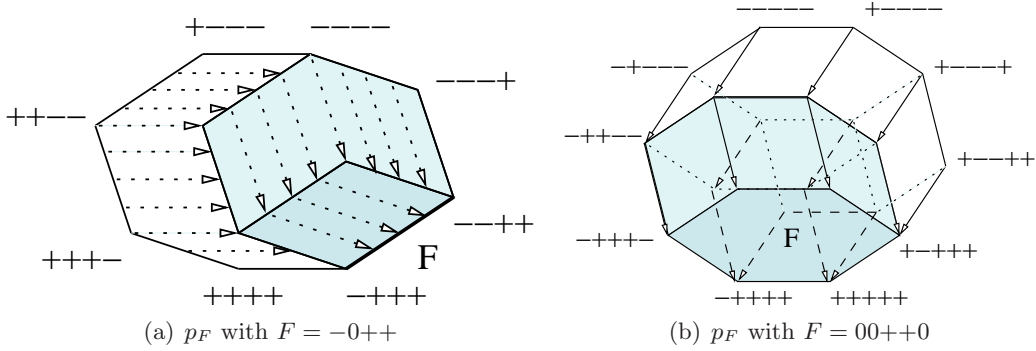

\centering
\subfigure[$p_F$ with $F=\mx0\px\px$]{
\includegraphics[height=1.5in]{zono6.pstex}
}
\subfigure[$p_F$ with $F=00\px\px0$]{
\includegraphics[height=1.6in]{zono3d.pstex}
}
\caption{Retracts onto faces of zonotopes of dimension $2$ and $3$}
\label{fig:retracts}
\end{figure}

\subsection{The BHR random walk}
Let $\A$ be an arrangement, and $w\colon\cF\to\R$ a discrete probability
distribution on the faces of $\A$.  The random walk introduced in
\cite{BHR99} is a random walk on chambers, taking $C\in\cC$ to $FC$ with
probability $w_F$.  Its transition matrix $K=K_\A$ can be regarded as a 
linear endomorphism of the vector space with basis $\cC$, given by
\begin{equation}\label{eq:BHR}
K(C)=\sum_{F\in\cF}w_FFC.
\end{equation}
An eigenvector of $K$ with eigenvalue $1$ gives a stationary distribution
for the random walk.  Brown and Diaconis~\cite[Theorem~2]{BD98} find that,
with an assumption of nondegeneracy, the eigenspace
is $1$-dimensional, which is to say 
the stationary distribution is unique.  In this
case, it is given by sampling faces without replacement: explicitly, one
sums over all permutations of the faces to obtain
\begin{equation}\label{eq:stationary}
\pi_C=\sum_{\substack{\sigma\in S_N\colon \\
C=F_{\sigma(1)}\cdots F_{\sigma(N)}}} \prod_{1\leq p\leq N}
\frac{w_{F_{\sigma(p)}}}{1-\sum_{i<p}w_{F_{\sigma(i)}}},
\end{equation}
where $N=\abs{\cF}$, the number of faces.  The complete list of eigenvalues
is given as follows, where $\mu$ denotes the M\"obius function of $L(\A)$. 
For each $X\in L(\A)$, let
\begin{equation}\label{eq:lambda}
\lambda_X=\sum_{F\colon\abs{F}\geq X} w_F.
\end{equation}
(Equivalently, $\lambda_X=\sum_{F\in\cF(\A^X)} w_F$, by \eqref{eq:loc}.)
\begin{theorem}[\cite{BHR99,BD98}]\label{th:BHR}
For each $X\in L_p(\A)$, for $0\leq p\leq \ell$, 
the matrix $K$ has an eigenspace of multiplicity
$(-1)^p\mu(W,X)$ with eigenvalue $\lambda_X$.
\end{theorem}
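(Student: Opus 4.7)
The plan is to prove Theorem~\ref{th:BHR} by exhibiting, for each $X \in L(\A)$, a subspace of $V(\A)$ on which $K$ acts as multiplication by $\lambda_X$, and then matching dimensions to $(-1)^{\codim X}\mu(W,X)$.

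First, I would exhibit a $\lambda_X$-eigenvector in the local module $V(\A^X)$. Writing $w = \sum_{F\in\cF} w_F F \in A$, one sees from \eqref{eq:localaction} that $F\cdot v = 0$ on $V(\A^X)$ whenever $|F|\not\geq X$, so $w$ acts on $V(\A^X)$ as $\sum_{|F|\geq X}w_F F$, which is $\lambda_X$ times the BHR transition operator for $\A^X$ with renormalized weights $w_F/\lambda_X$. Applying \cite[Theorem 2]{BD98} to $\A^X$ yields a stationary distribution in $V(\A^X)$ that is an eigenvector of $w$ with eigenvalue $\lambda_X$.

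Next, I would decompose $V(\A)$ so as to accommodate all such local eigenvectors simultaneously. The face algebra $A = R[\cF]$ is graded by $L(\A)$ via $A_Y = \spn\{F\colon |F|=Y\}$; the sign-sequence formula \eqref{eq:vprod} gives $|FG|=|F|\vee |G|$, so $A_Y\cdot A_Z \subseteq A_{Y\vee Z}$, and $I_{\geq X}:=\bigoplus_{Y\geq X}A_Y$ is a left ideal with successive quotient $I_{\geq X}/I_{>X}$ isomorphic to $V(\A^X)$ as an $A$-module (the quotient kills precisely the faces $F$ with $|F|\not\geq X$, in agreement with \eqref{eq:localaction}). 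The resulting lattice filtration of $A$ restricts to a filtration of the submodule $V(\A)\cong A_W$ whose subquotients are built from the local modules $V(\A^X)$.

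Finally, the multiplicity is fixed by Zaslavsky's theorem: $\dim V(\A) = |\cC| = \sum_{X\in L(\A)}(-1)^{\codim X}\mu(W,X)$. Combined with the lower bound provided by Step~1 on each eigenspace, this count can only be realized when every $\lambda_X$-eigenspace has dimension exactly $(-1)^{\codim X}\mu(W,X)$, with a M\"obius inversion on $L(\A)$ to separate the contributions of $X$ from those of the $Y>X$ above it.

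The main obstacle is separating eigenspaces associated to distinct $X$ that happen to share a common eigenvalue $\lambda_X=\lambda_Y$; in that non-generic situation the filtration argument only bounds the sum of multiplicities, so one still has to recover each $V(\A^X)$-piece intrinsically. I would handle this by following Brown~\cite{Br00}: constructing primitive idempotents $e_X \in R[\cF]$ (via the semigroup structure on $\cF$) with $\sum_X e_X = 1$, $e_X e_Y = \delta_{XY}e_X$, and $e_X A \cong V(\A^X)$, which immediately splits $V(\A)$ as a direct sum of its isotypic components. An alternative, more hands-on route is induction on $|\A|$ via deletion-restriction, mirroring the recursion for the characteristic polynomial $\chi(\A,t)$ and tracking the spectrum of $K$ across the corresponding short exact sequence of $A$-modules.
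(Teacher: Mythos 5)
Your Step~1 is sound and matches the paper's strategy: the stationary vector for the restriction $\A^X$ (the paper's $q^X$ from \eqref{eq:1eivec}, with Lemma~\ref{lem:eivec} applied to $\A^X$) is a $\lambda_X$-eigenvector for the action of $\sum_F w_F F$ on $V(\A^X)$. The gap is in Step~2. From \eqref{eq:vprod}, $(FG)_H=0$ exactly when $F_H=0$ and $G_H=0$, so $\A_{\abs{FG}}=\A_{\abs{F}}\cap\A_{\abs{G}}$; since $L(\A)$ is ordered by reverse inclusion, this says $\abs{FG}=\abs{F}\wedge\abs{G}$, the \emph{meet}, not the join as you wrote. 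With this correction, $A_Y\cdot A_Z\subseteq A_{Y\wedge Z}$, and your $I_{\geq X}=\bigoplus_{Y\geq X}A_Y$ is \emph{not} a left ideal: take a chamber $C$ and any $F$ with $\abs{F}\geq X$; then $CF=C$ has support $W\not\geq X$ unless $X=W$. The ideals that do exist are $I_{\leq X}=\bigoplus_{Y\leq X}A_Y$, and indeed $I_{\leq X}/I_{<X}\cong V(\A^X)$ as $A$-modules, consistent with \eqref{eq:localaction}. But $V(\A)=A_W=I_{\leq W}$ is the \emph{smallest} nonzero such ideal, so the support filtration of $A$ induces only the trivial filtration $0\subset V(\A)$ on the chamber module; the claim that $V(\A)$ has subquotients "built from the local modules $V(\A^X)$" collapses for $X\neq W$. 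Consequently the Zaslavsky count in Step~3 has nothing to be inverted against, and the entire burden of the proof falls on the fallback (Brown's idempotents, or deletion--restriction) which you do not carry out.

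For comparison, the paper's route is genuinely different and sidesteps this issue. Rather than a support filtration of the face algebra restricted to $V(\A)$, it uses the dual Varchenko--Gel\cprime fand filtration $W^pV(\A)$, which \emph{is} proved to be a filtration of $V(\A)$ by $A$-submodules (Corollary~\ref{cor:submodule}), and whose subquotients decompose by the Brieskorn isomorphism \eqref{eq:brieskorn'} into pieces indexed by $X\in L_p(\A)$. The map $\phi$ of \eqref{eq:defphi} is an $A$-module homomorphism (Lemma~\ref{lem:hom}), so feeding your local eigenvector $q^X$ into $\phi(\XX\otimes-)$ for each flag $\XX$ ending at $X$ produces eigenvectors in $V(\A)$, and the rank $\abs{\mu(W,X)}=(-1)^p\mu(W,X)$ of $\Fl_p(\A_X)$ gives the multiplicity directly. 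Your underlying instinct --- localize to $\A^X$ to find the eigenvector, then reassemble and count --- is exactly right; it is the reassembly that requires the Varchenko--Gel\cprime fand (or Brown idempotent) machinery rather than the support filtration of $A$.
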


The main result here is a corresponding basis of eigenvectors for each
eigenvalue, Theorem~\ref{th:main}.  For this, it is convenient to
regard the distribution weights $w_F$ as indeterminates as in \cite{Br00}, 
diagonalize, and then specialize afterwards: let 
$R=\R(w_F\colon F\in \cF)$, the fraction field 
of polynomials in the variables $w_F$.  In doing so, we relax the
condition that the weights sum to $1$, so \eqref{eq:stationary}
needs to be adjusted accordingly.  Let $q\in V(\A)$ be the vector
whose coordinate on chamber $C$ is given by
\begin{equation}\label{eq:1eivec}
q_C=\sum_{\substack{\sigma\in S_N\colon \\
C=F_{\sigma(1)}\cdots F_{\sigma(N)}}} \prod_{p=1}^N
\big(\sum_{i=p}^N w_{F_{\sigma(i)}}\big)^{-1}.
\end{equation}
\begin{lemma}\label{lem:eivec}
For any arrangement $\A$, the 
vector $q$ is a $\lambda_W$-eigenvector of $K$.
\end{lemma}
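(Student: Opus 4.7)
The plan is to expand $Kq$ using \eqref{eq:BHR} and \eqref{eq:1eivec}, use the deletion property $FGF=FG$ to regroup each $G$-shift as a permutation of $\cF$ beginning with $G$, and finally verify a telescoping identity in the resulting scalar coefficients.

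First I would write $q=\sum_{\sigma\in S_N} c(\sigma)\,F_{\sigma(1)}\cdots F_{\sigma(N)}$, where $c(\sigma):=\prod_{p=1}^N\bigl(\sum_{i\geq p} w_{F_{\sigma(i)}}\bigr)^{-1}$ is the coefficient from \eqref{eq:1eivec}, so that
\[
Kq=\sum_{G\in\cF}\sum_{\sigma\in S_N} w_G\,c(\sigma)\,G\,F_{\sigma(1)}\cdots F_{\sigma(N)}.
\]
For each pair $(G,\sigma)$, if $p=\sigma^{-1}(G)$ then the deletion property allows me to erase the second appearance of $G$, giving $G\,F_{\sigma(1)}\cdots F_{\sigma(N)}=F_{\tau(1)}\cdots F_{\tau(N)}$, where $\tau$ is the permutation of $\cF$ obtained from $\sigma$ by moving $G$ to the first position. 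This defines an $N$-to-$1$ map $(G,\sigma)\mapsto\tau$: a given $\tau$ forces $G=F_{\tau(1)}$, and the preimages are indexed by the $N$ possible positions $p$ at which $F_{\tau(1)}$ could be reinserted.

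Next I would regroup $Kq$ by $\tau$ and compare coefficients with $\lambda_W q$ basis chamber by basis chamber. Fix $\tau$ and abbreviate $a_i:=w_{F_{\tau(i)}}$, $S_k:=a_k+\cdots+a_N$ (so $S_1=\lambda_W$ and $S_{N+1}=0$), and $B_k:=a_1+S_{k+1}$. A direct inspection of which faces occupy positions $k,\ldots,N$ in the preimage $\sigma^{(p)}$ (obtained from $\tau$ by relocating $\tau(1)$ to position $p$) yields
\[
c\bigl(\sigma^{(p)}\bigr)=\prod_{k=1}^p B_k^{-1}\prod_{k=p+1}^N S_k^{-1}.
\]
After dividing the target equality through by $\prod_{k=1}^N S_k^{-1}$, the claim reduces to the scalar identity
\[
a_1\sum_{p=1}^N\prod_{k=1}^p\frac{S_k}{B_k}=S_1.
\]

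I would verify this by telescoping. The key observation is that $B_p-S_{p+1}=a_1$ is independent of $p$. Setting $V_p:=S_{p+1}\prod_{k=2}^p(S_k/B_k)$ for $0\le p\le N$, one has $V_0=S_1$ and $V_N=0$, and a short calculation gives $V_{p-1}-V_p=a_1\prod_{k=2}^p(S_k/B_k)$; summing (and using $S_1/B_1=1$) then telescopes to $V_0-V_N=S_1$, as required. The main obstacle will be spotting the correct telescoping ansatz: the deletion-based regrouping is essentially forced by the algebra, but the definition of $V_p$ becomes natural only once one notices the constant identity $B_p-S_{p+1}=a_1$, which is where the combinatorial content of the argument is concentrated.
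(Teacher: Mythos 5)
Your proof is correct and follows essentially the same strategy as the paper: expand $Kq$, use the deletion property to regroup each pair $(G,\sigma)$ as a permutation $\tau$ with $G=F_{\tau(1)}$, and reduce the eigenvector claim to a rational-function identity. The identity you reduce to, $a_1\sum_{p=1}^N \prod_{k=1}^p(S_k/B_k)=S_1$, is precisely the paper's identity \eqref{eq:identity} (rewritten after clearing the common factor $\prod_k S_k^{-1}$); the paper states this identity without proof ("by clearing denominators, one may verify"), whereas you supply a clean telescoping argument for it via $V_p:=S_{p+1}\prod_{k=2}^p(S_k/B_k)$ and the observation $B_p-S_{p+1}=a_1$. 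So the structure is the same, but you go slightly further in one place the paper glosses over.
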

\begin{proof}
We provide a direct calculation in lieu of adapting the corresponding
result from \cite{BD98}.  For succinctness, let
\[
f(x_1,\ldots,x_N)=\prod_{i=1}^N\big(\sum_{i=p}^N x_i\big)^{-1}
\]
for any choice of $x_i$'s, and abbreviate $f(\sigma):=f(w_{F_{\sigma(1)}},
\ldots,w_{F_{\sigma(N)}})$ for any permutation of the faces $\sigma\in S_N$.
By clearing denominators, one may verify the identity
\begin{align}\label{eq:identity}
f(x_1,\ldots,x_N)+&f(x_2,x_1,x_3,\ldots,x_N)+\cdots\nonumber \\
&+f(x_2,\ldots,x_N,x_1)=f(x_1,\ldots,x_N)\cdot\big(\sum_{i=1}^N x_i\big)/x_1.
\end{align}
For $1\leq i\leq N$, let $\sigma_i$ denote the $i$-cycle 
$(1,2,\ldots,i)\in S_N$.  Then \eqref{eq:identity} states that
\begin{equation}\label{eq:identity2}
\sum_{i=1}^N f(\sigma\sigma_i^{-1})=(\lambda_W/x_1)f(\sigma),
\end{equation}
since $\lambda_W=\sum_{i=1}^N w_{F_i}$.

Now $Kq= \sum_{F\in\cF} w_Fq_C(FC)$.  For each $C\in \cC$, we compute:
\begin{eqnarray*}
(Kq)_C&=& \sum_{F,C'\colon C=FC'} w_Fq_{C'}\\
&=&
\sum_{\substack{F\in\cF,\;\sigma\in S_N\colon 
\\ C=FF_{\sigma(1)}\cdots F_{\sigma(N)}}}
w_F f(\sigma)\\
&=& \sum_{\substack{\sigma\in S_N\colon \\
C=F_{\sigma(1)}\cdots F_{\sigma(N)}}}
\sum_{i=1}^N w_{F_{\sigma(1)}}f(\sigma\sigma_i^{-1}),\text{~by the deletion property,
\S\ref{ss:face},}\\
&=&\lambda_W q_C,\text{~by \eqref{eq:identity2}.}
\end{eqnarray*}
\end{proof}
\begin{remark}
As written, the weights $\set{w_F}$ in \eqref{eq:1eivec} only admit specializations
to nonzero real numbers.  One may clear denominators to obtain a general
polynomial expression for an eigenvector, however this is clumsy to write 
in general.
\end{remark} 

\subsection{Combinatorial Heaviside functions}
We will recover Theorem~\ref{th:BHR}, together with eigenvectors, by
exploiting some fundamental structural results of Varchenko and 
Gel\cprime fand~\cite{VG87}, which we briefly describe here.  For 
more details, see \cite{De02}. 

The Varchenko-Gel\cprime fand ring of $\A$ is defined additively to
be simply the space of linear functionals on chambers, $V^*$.  The
ring structure is given by coordinatewise multiplication.
The interest lies in the choice of generators: for each hyperplane
$H\in\A$, let $x_H\in V^*$ be the function defined by
\[
x_H(C)=\begin{cases}
1& \text{if $C$ is on the positive side of $H$;}\\
0& \text{otherwise.}
\end{cases}
\]
For a set of hyperplanes $I\subseteq\A$, let $x_I$ be the monomial
$x_I=\prod_{H\in I} x_H$.  (Since each $x_H$ is idempotent, 
we only need to consider square-free monomials.)  Let $1\in V^*$ be the
function given by $1(C)=1$ for all chambers $C\in\cC$.

In \cite{VG87}, it is shown that the Varchenko-Gel\cprime fand ring
admits a presentation much like the Orlik-Solomon algebra, with 
generators $x_H$ and certain combinatorial relations: see 
\cite{Pr06} for an interpretation that compares the two.
Unlike the Orlik-Solomon algebra, however, 
the relations amongst the generators are inhomogeneous, so it is
useful to define a degree filtration by letting
\[
P_pV^*=\set{f\in V^*\colon \text{$f$ can be written as a polynomial in 
$x_H$'s of degree at most $p$.}}
\]
By \cite[Theorem~1]{VG87}, 
\[
V^*=P_\ell V^*\supseteq P_{\ell-1} V^*\supseteq\cdots P_0 V^*\supsetneq 0,
\]
with the function $1$ spanning $P_0$.  The filtration is ``natural'' in
the sense that, if $\B$ is a subarrangement of $\A$, containment
gives a map of chambers $\cC(\A)\to \cC(\B)$.  This induces a map
$V(\B)^*\to V(\A)^*$, which is easily seen to preserve the degree filtration.

Now let $\gr_p V^*=P_p V^*/P_{p-1} V^*$, for $0\leq p\leq \ell$. 
Let $b_p=(-1)^p\sum_{X\in L_p(\A)}\mu(W,X)$, the $p$th Betti number of 
the arrangement.  Then the degree filtration satisfies an analogue of
Brieskorn's Lemma for arrangements:
\begin{theorem}[Theorem~3, Corollaries 2,3 in \cite{VG87}]\label{th:vg}
For $0\leq p\leq \ell$, $\gr_p V^*$ is a free module, of rank equal
to $b_p(\A)$.  More precisely, there are isomorphisms
\begin{equation}\label{eq:brieskorn}
\gr_p V(\A)^*\cong\bigoplus_{X\in L_p(\A)}\gr_p V(\A_X)^*.
\end{equation}
induced by the inclusion of the arrangement $\A_X$ into $\A$.
\end{theorem}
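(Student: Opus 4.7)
The plan is to exhibit explicit bases of the graded pieces $\gr_p V^*$ indexed by ``no broken circuit'' (NBC) sets, and to deduce the direct-sum decomposition by partitioning the basis according to the intersection flat. This gives both the freeness statement and~\eqref{eq:brieskorn} simultaneously.

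First I would make the map of~\eqref{eq:brieskorn} explicit. For each $X\in L(\A)$, the containment $\A_X\subseteq\A$ induces the map $\cC(\A)\to\cC(\A_X)$ described just before Theorem~\ref{th:vg}, and dualizing gives an inclusion $\iota_X\colon V(\A_X)^*\hookrightarrow V(\A)^*$ sending the Heaviside generator $x_H$ (for $H\in\A_X$) to $x_H$ (for $H\in\A$). Consequently $\iota_X$ respects the degree filtration, and the maps $\iota_X$ together induce the comparison map of~\eqref{eq:brieskorn}.

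Next, fix a linear order on $\A$. Call a subset $I\subseteq\A$ NBC if it contains no broken circuit, i.e.\ no set of the form $C\setminus\set{\min C}$ for a circuit $C$ of the matroid of $\A$. I claim that for each $p$ the monomials $\set{x_I\colon I\text{ NBC},\;\abs{I}=p}$ project to a basis of $\gr_pV^*$. Spanning is proved by a straightening argument: by the presentation recorded in~\cite{VG87,Pr06}, each circuit $C$ yields an inhomogeneous ``circuit relation'' which rewrites $x_{C\setminus\set{\min C}}$ as a $\Z$-linear combination of monomials $x_{C\setminus\set{H}}$ for $H>\min C$, plus terms of strictly smaller degree. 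Iterating this reduction expresses any monomial of $P_pV^*$ modulo $P_{p-1}V^*$ as a $\Z$-linear combination of NBC monomials of size exactly $p$.

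For linear independence and the decomposition, note that an NBC set $I$ of size $p$ satisfies $\bigcap_{H\in I}H=X$ for a unique $X\in L_p(\A)$, and in the inherited order such sets coincide with the rank-$p$ NBC sets of $\A_X$. The classical matroid NBC enumeration gives $\abs{\set{I\text{ NBC in }\A_X,\;\abs{I}=p}}=(-1)^p\mu(W,X)$, so summing over $X\in L_p(\A)$ produces $b_p(\A)$ NBC sets of size $p$ in $\A$. Zaslavsky's formula $\abs{\cC(\A)}=\sum_X\abs{\mu(W,X)}=\sum_p b_p(\A)$ equals $\dim V^*$, so the spanning bounds $\dim\gr_pV^*\leq b_p(\A)$ must be equalities and the NBC monomials of size $p$ form a basis of $\gr_pV^*$, which is therefore free of rank $b_p(\A)$. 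Under $\iota_X$, the NBC basis of $\gr_pV(\A_X)^*$ (consisting of the full-rank NBC sets of $\A_X$) maps bijectively onto the subset of the NBC basis of $\gr_pV(\A)^*$ indexed by those $I$ with $\bigcap_{H\in I}H=X$, establishing~\eqref{eq:brieskorn}.

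The main obstacle is the straightening step: one must pin down the explicit form of the circuit relations in the Varchenko-Gel\cprime fand ring and verify that they suffice to reduce any monomial to an NBC representative modulo lower degree. Once those relations are in hand (as in~\cite{VG87}), the remainder of the argument is standard matroid bookkeeping plus Zaslavsky's enumeration.
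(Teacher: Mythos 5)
The paper offers no proof of Theorem~\ref{th:vg}: the bracketed attribution (``Theorem~3, Corollaries~2,~3 in \cite{VG87}'') is the entire justification, and the theorem is imported as a black box. So there is nothing in the paper to compare your argument against, but your NBC sketch is in the right spirit and is broadly the route taken in~\cite{VG87} and in Proudfoot's comparison~\cite{Pr06}. The counting logic is sound: once NBC monomials of degree $p$ span $\gr_pV^*$, one has $\dim\gr_pV^*\le b_p(\A)$, while Zaslavsky gives $\sum_p b_p(\A)=\abs{\cC}=\dim V^*$, forcing equality; and because an NBC set is automatically independent, a size-$p$ NBC set $I$ determines a unique $X\in L_p(\A)$, and any broken circuit $B=C\setminus\set{\min C}\subseteq I$ forces $\min C\supseteq\bigcap_{H\in B}H\supseteq X$, hence $C\subseteq\A_X$, so these are exactly the top-rank NBC sets of $\A_X$ and~\eqref{eq:brieskorn} follows from the fact that $\iota_X$ carries $x_H$ to $x_H$.

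The one genuine gap is the one you flag yourself: the straightening. To close it you need the precise covector relations of~\cite{VG87}. For a signed circuit $C=C^+\sqcup C^-$ the relation $\prod_{H\in C^+}x_H\prod_{H\in C^-}(1-x_H)=0$, expanded, rewrites $x_C$ as $\sum_{H\in C^-}x_{C\setminus\set{H}}$ plus lower-degree terms (this disposes of monomials $x_I$ with $I$ dependent), and subtracting the same relation for the opposite reorientation gives $\sum_{H\in C^-}x_{C\setminus\set{H}}-\sum_{H\in C^+}x_{C\setminus\set{H}}\equiv 0$ modulo $P_{\abs{C}-2}V^*$, in which the coefficient of $x_{C\setminus\set{\min C}}$ is $\pm1$. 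With that top-degree form in hand the standard lexicographic reduction to NBC monomials goes through exactly as for the Orlik--Solomon algebra, and your sketch becomes a complete proof.
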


\begin{example}\label{ex:threelines}
Consider the arrangement of lines $\set{x,y,x-y}$ through the origin in $\R^2$,
as in Figure~\ref{fig:one}.
Call the lines $H_1,H_2,H_3$ in this order.  Bases for $\gr_p V^*$ are:
\[
\begin{array}{r|l}
p=2 & x_{H_1}x_{H_2},\;x_{H_1}x_{H_3},\\
p=1 & x_{H_i}\colon 1\leq i\leq 3,\\
p=0 & 1.
\end{array}
\]
For example,  
the function $x_{H_1}x_{H_3}$ takes the value $1$ on the chamber $x>0$, $x>y$,
and zero elsewhere.
\end{example}
\subsection{The dual filtration}\label{ss:dualfiltr}
The degree filtration defines an orthogonal,
decreasing filtration on the dual space,
$V^{**}\cong V$: following \cite{VG87}, let
\[
W^pV=\set{v\in V\colon f(v)=0\text{~for all $f\in P_{p-1}V^*$.}}
\]
Then 
\[
V=W^0V\supseteq W^1V\supseteq\cdots\supseteq W^{\ell+1}V=0.
\]
If $\B$ is a subarrangement of $\A$ and $i\colon \cC(\A)\to
\cC(\B)$ is the induced map of chambers, the ``natural'' map 
$V(\A)\to V(\B)$ extends $i$ linearly.  Dually, this map
preserves the $W$-filtration.

Let $\gr^pV=W^pV/W^{p+1}V$, for $0\leq p\leq\ell$.  The dual version
of Theorem~\ref{th:vg} reads as follows:
\begin{proposition}
For $0\leq p\leq \ell$, we have $\gr_p V^*\cong(\gr^p V)^*$.
So $\gr^p V$ is also free of rank $b_p$, and admits a decomposition
\begin{equation}\label{eq:brieskorn'}
\gr^p V(\A)\cong\bigoplus_{X\in L_p(\A)}\gr^p V(\A_X).
\end{equation}
\end{proposition}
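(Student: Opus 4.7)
The proof will be a standard linear-algebra duality calculation, enabled by the fact that $V$ is finite-dimensional. My plan is to establish the isomorphism $\gr_p V^* \cong (\gr^p V)^*$ directly and then read off the rank statement and the decomposition~\eqref{eq:brieskorn'} from Theorem~\ref{th:vg} by dualizing.

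For the key isomorphism I would set $A = P_{p-1}V^*$ and $B = P_p V^*$, so that by construction $W^p V = A^\perp$ and $W^{p+1} V = B^\perp$, where $A^\perp$ denotes the annihilator in $V$. The evaluation pairing $V \times V^* \to \R$ restricted to $A^\perp \times B$ vanishes on $A^\perp \times A$, so it induces a linear map $A^\perp \to (B/A)^* = (\gr_p V^*)^*$. Its kernel is $A^\perp \cap B^\perp = B^\perp$, giving an injection
\[
\gr^p V = A^\perp/B^\perp \hookrightarrow (\gr_p V^*)^*.
\]
A dimension count yields $\dim(A^\perp/B^\perp)=\dim B - \dim A = \dim \gr_p V^*$, so the injection is actually an isomorphism. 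Dualizing (harmless in finite dimensions) then gives $\gr_p V^* \cong (\gr^p V)^*$.

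The remaining claims follow formally. By Theorem~\ref{th:vg}, $\gr_p V^*$ is free of rank $b_p(\A)$, and therefore so is its dual $\gr^p V$. For the decomposition~\eqref{eq:brieskorn'}, the inclusion $\A_X \hookrightarrow \A$ induces $V(\A_X)^* \to V(\A)^*$, whose dual is the natural surjection $V(\A) \to V(\A_X)$ discussed in Section~\ref{ss:dualfiltr}; this dual respects the $W$-filtration. Applying the previous paragraph termwise to~\eqref{eq:brieskorn} then gives the claimed isomorphism, with the map induced by the natural maps $V(\A) \to V(\A_X)$. I do not expect any real obstacle: the only point that warrants a check is the compatibility of the natural maps with the filtrations on each side, and that is already recorded at the start of Section~\ref{ss:dualfiltr}.
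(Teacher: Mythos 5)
Your proof is correct and follows essentially the same linear-duality computation as the paper: both arguments produce the isomorphism $\gr^p V\cong(\gr_p V^*)^*$ from the evaluation pairing, and both appeal to Theorem~\ref{th:vg} for freeness and the Brieskorn decomposition. The one difference worth flagging is in how surjectivity of the injection $\gr^p V\hookrightarrow(\gr_p V^*)^*$ is obtained. You close the gap by a dimension count, $\dim(A^\perp/B^\perp)=\dim B-\dim A=\dim\gr_p V^*$, which requires the coefficient ring to be a field. The paper instead writes down the four-term sequence
\[
0\to W^{p+1}V\to W^pV\xrightarrow{\,e\,}(P_pV^*)^*\xrightarrow{\,\res\,}(P_{p-1}V^*)^*\to 0,
\]
checks exactness at $(P_pV^*)^*$ by pulling back along $e^{-1}$, and then uses that $\gr_p V^*$ is free to identify $\ker(\res)$ with $(\gr_pV^*)^*$. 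That version is deliberately ring-agnostic: it works over any base (e.g.\ $\Z$, the natural home of the Varchenko--Gel\cprime fand theory) in which $V$ is free and $\gr_pV^*$ is free, rather than only over a field. In the setting of this paper the coefficients do form a field, so your shortcut is legitimate here, but if you wanted to keep the integral statement you would need to replace the dimension count with the freeness-based exactness argument (or at least observe that freeness of $\gr_pV^*$ splits the relevant sequence). The remainder of your argument --- dualizing Theorem~\ref{th:vg} term by term and checking that the natural maps $V(\A)\to V(\A_X)$ respect the $W$-filtration --- matches the paper's ``from which the rest follows directly.''
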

\begin{proof}
We prove the first assertion, from which the rest follows directly.
Let $e\colon V\to V^{**}$ be the natural isomorphism.
If $x\in W^pV$, then $e(x)$ restricts to a map $P_pV^*\to R$ by $x(f)=f(x)$.
If $e(x)=0$, this means $f(x)=0$ for all $f\in P_pV^*$, so $x\in W^{p+1}V$.

On the other hand, restriction of functions gives a surjective map
$\res\colon (P_pV^*)^*\to (P_{p-1}V^*)^*$.  Putting this together, we have
shown that the sequence
\begin{equation}\label{eq:dualityseq}
\xymatrix{
0\ar[r] & W^{p+1}V\ar[r] & W^pV\ar[r]^{e} &
(P_pV^*)^*\ar[r]^{\res} & (P_{p-1}V^*)^*\ar[r] & 0
}
\end{equation}
is exact except possibly at $(P_pV^*)^*$.
For this, suppose $\phi$ is in the kernel of $\res$.  Let $x=e^{-1}(\phi)$.
That $\phi$ restricts to zero means
$f(x)=0$ for all $f\in P_{p-1}V^*$, which is to say that $x\in W^pV$,
and \eqref{eq:dualityseq} is exact.

The cokernel of $W^{p+1}V\hookrightarrow W^pV$ is $\gr^pV$, by definition.
Since $\gr_pV^*$ is 
free (Theorem~\ref{th:vg}), the kernel of the map $\res$ is $(\gr_pV^*)^*$.
It follows that the restriction of $e$ induces an isomorphism 
$\gr^pV\cong(\gr_pV^*)^*$.
\end{proof}
\subsection{Flag cochains}
The dual counterparts of the combinatorial Heaviside functions are
the {\em flag cochains} of \cite{VG87}, which we reformulate slightly 
following \cite{De02}.  First, for a ranked poset $P$, for $p\geq0$ 
let $\flag_p(P)$ be the set of $p$-flags: that is, 
chains $x_0\lx x_1\lx \cdots\lx x_p$ in $P$ where $x_i$ has rank $i$ for 
$0\leq i\leq p$.
For a fixed arrangement $\A$, let $\Fl_p=\Fl_p(\A)$
be the free abelian group on $\flag_p(L(\A))$, modulo the relations
\[
\sum_{Y\colon X_{i-1}< Y< X_{i+1}}(X_0\lx \cdots\lx X_{i-1}\lx Y\lx X_{i+1}
\cdots\lx X_p),
\]
for each flag $(X_0\lx\cdots\lx X_p)$ and index $i$, $0<i<p$.
The groups $\Fl_p$ are isomorphic to the homology groups of 
the complexified complement of $\A$: see \cite{SV91}.  In particular, 
\begin{equation}\label{eq:dualb}
\Fl_p(\A)\cong\bigoplus_{X\in L_p(\A)}\Fl_p(\A_X),
\end{equation}
a dual formulation of Brieskorn's Lemma.  The rank of the summand 
indexed by $X$ is $\abs{\mu(W,X)}$.

Let $\FF$ be a flag in $\cF(\A)$: then
$\FF=(F_0\lx F_1\lx\cdots\lx F_p)$ for some $p$, where each 
$F_i$ is a face of codimension $i$.  Since we continue to assume that
$\A$ is a central arrangement, each face $F$ has an antipodally opposite
face, which we denote by $\overline{F}$.
Define an element of $V$ using the following expression in the face algebra:
\begin{equation}\label{eq:b}
b(\FF)=F_p(F_{p-1}-\overline{F_{p-1}})(F_{p-2}-\overline{F_{p-2}})
\cdots(F_0-\overline{F_0}).
\end{equation}
Varchenko and Gel\cprime fand~\cite{VG87} find that the flag cochains
$b(\FF)$ span $W^pV$, for each $p$.  In fact, they do so in a way
compatible with the Brieskorn decompositions \eqref{eq:brieskorn'},
\eqref{eq:dualb}.
In order to indicate how this goes, we need some additional notation.
\begin{example}\label{ex:b}
For the arrangement of Example~\ref{ex:threelines}, 
let $\FF=(\px\px\mx\lx0\px\mx\lx000)$.  
Then $b(\FF)=C_{\px\px\mx}-C_{\mx\px\mx}-C_{\px\mx\px}+C_{\mx\mx\px}$.
\end{example}

For any $\FF\in\flag_p(\cF)$, let $\abs{\FF}$ denote the flag $\XX$
in the intersection lattice with $X_i=\abs{F_i}$.  Regard $F_p$ as a 
chamber of $\A^{X_p}$, in order to define a map
\[
f\colon \flag_p(\cF(\A))\to
\bigsqcup_{X\in L_p(\A)}\flag_p(L(\A_X))\times \cC(\A^X)
\]
by $f(\FF)=(\abs{\FF},F_p)$.  Consider the fibres of $f$.
If $f(\FF)=(\XX,F)$, then $F_p=F$.  There are two possibilities for
$F_{p-1}$, however, obtained by moving away from $F$ inside $X_{p-1}$
in either of two directions.  Once $F_{p-1}$ is chosen, there are 
two possiblities for $F_{p-2}$, and so on.  By inspecting
\eqref{eq:b}, one finds that flag cochains $b(\FF)$ differ at most by
a sign on flags in the same fibre of the map $f$.

In order to reconcile the choice of signs, choose an orientation of each
face of the zonotope $Z_\A$ so that parallel faces have the same orientation,
but arbitrarily otherwise.  (This is equivalent to choosing a coorientation
of each element of $L(\A)$, as in \cite{VG87}, but easier to draw.)  
Then for each covering relation $F<G$ in $\cF(\A)$, let
$\vareps(F,G)=\pm1$ according to whether or not the orientations on $F$ and
$G$ agree.  Let $\vareps(\FF)=\prod_{i=0}^{p-1}\vareps(F_i,F_{i+1})$.

The last result we need to recall is the dual formulation of 
\cite[Theorem~8]{VG87}.
\begin{theorem}
For $0\leq p\leq \ell$, there is a well-defined map depending on the
choices of orientations,
\begin{equation}\label{eq:noncanonical}
\pi_p\colon W^p V(\A)\to \Fl_p(\A),
\end{equation}
for which $\pi_p(b(\FF))=\vareps(\FF)\abs{\FF}$, 
for all flags $\FF\in\flag_p(\cF)$.
The kernel of $\pi_p$ is $W^{p+1}V(\A)$.
\end{theorem}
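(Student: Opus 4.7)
The plan is to construct $\pi_p$ as the composition
\[
W^p V(\A) \twoheadrightarrow \gr^p V(\A) \xrightarrow{\;\phi\;} \Fl_p(\A),
\]
where $\phi$ is an isomorphism.  This presentation automatically gives $\ker\pi_p = W^{p+1}V(\A)$, so the remaining task is to produce $\phi$ and to verify $\phi([b(\FF)])=\vareps(\FF)\abs{\FF}$ on each flag cochain.

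The construction of $\phi$ reduces to a ``top case'' via Brieskorn.  By \eqref{eq:brieskorn'} and \eqref{eq:dualb}, both $\gr^p V(\A)$ and $\Fl_p(\A)$ split as direct sums indexed by $X\in L_p(\A)$, with $X$-th summand of rank $\abs{\mu(W,X)}$ on each side.  The pairing between the combinatorial Heaviside basis of $\gr_p V^*$ and the flag cochains shows that a $b(\FF)$ with $\abs{F_p}=X$ lies in the $\gr^p V(\A_X)$ summand, and similarly a flag $\XX\in\flag_p(L(\A))$ with $X_p=X$ generates the $\Fl_p(\A_X)$ summand; thus the prescription $b(\FF)\mapsto \vareps(\FF)\abs{\FF}$ respects the splittings.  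It therefore suffices to construct an isomorphism $\phi_X\colon\gr^p V(\A_X)\to\Fl_p(\A_X)$ when $\A_X$ is essential of rank $p$.  In this setting every $\FF\in\flag_p(\cF(\A_X))$ is maximal, with $F_p$ the origin.

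In the top case, I define $\phi_X^{-1}\colon\Fl_p(\A_X)\to\gr^p V(\A_X)$ on generators by sending a maximal flag $\XX$ to $[\vareps(\FF)b(\FF)]$ for any lift $\FF$ with $\abs{\FF}=\XX$.  Well-definedness requires two verifications.  First, independence of the lift within a fibre of the map $f$: flipping any $F_i$ with $i<p$ to $\overline{F_i}$ multiplies the factor $(F_i-\overline{F_i})$ in $b(\FF)$ by $-1$, and the orientation convention defining $\vareps$ is calibrated to absorb exactly this sign, so $\vareps(\FF)b(\FF)$ depends only on the fibre, hence (since $F_p$ must be the origin) only on $\abs{\FF}$.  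Second, the defining relations of $\Fl_p(\A_X)$ must map to zero: for a partial flag missing position $i$ with $0<i<p$, one must show $\sum_Y\vareps(\FF^Y)b(\FF^Y)\in W^{p+1}V(\A_X)$, which vanishes in the top case.

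The second verification is the main obstacle.  My approach is to expand $b(\FF^Y) = F_p(F_{p-1}-\overline{F_{p-1}})\cdots(F_0-\overline{F_0})$ and observe that only the middle factor $(F_i^Y-\overline{F_i^Y})$ depends on $Y$; the codimension-$i$ faces of $\cF(\A_X)$ lying between $F_{i-1}$ and $F_{i+1}$ describe, up to affine translation, a rank-two local slice of $\A_X$, and a direct computation in the face algebra applied to the surrounding product should collapse the sum over $Y$, mirroring on the dual side the inhomogeneous Varchenko--Gel\cprime fand relations that cut out $\gr_p V^*$.  Once this is in hand, $\phi_X$ is a well-defined surjection and an isomorphism by rank count.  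Reassembling over $X\in L_p(\A)$ yields $\phi$, and the identity $\pi_p(b(\FF))=\vareps(\FF)\abs{\FF}$ for a general flag $\FF$ with $\abs{F_p}=X$ follows from the naturality of the Brieskorn decomposition, since such an $\FF$ is maximal in $\cF(\A_X)$.
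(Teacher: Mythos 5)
The theorem in question is not proved in the paper at all: it is introduced with the sentence ``The last result we need to recall is the dual formulation of \cite[Theorem~8]{VG87},'' and is simply cited.  Your proposal, by contrast, is a from-scratch attempt, and the overall architecture is reasonable: factor $\pi_p$ through $\gr^p V(\A)$ (so that the kernel is automatically $W^{p+1}V(\A)$), split both sides by Brieskorn, and verify well-definedness of the induced map $\Fl_p\to\gr^p V$ on the generators and on the defining relations of $\Fl_p$.  But two of the ingredients are not sound as written.

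For the first verification, you assert that the fibre of $f$ is traversed by ``flipping any $F_i$ with $i<p$ to $\overline{F_i}$.''  In the top case, $F_{p-1}$ and $\overline{F_{p-1}}$ are indeed the two rays meeting $F_p=0$, but for $i<p-1$, replacing only $F_i$ by its antipode is typically not a valid flag: $F_{i+1}$ is a proper face of the cone $F_i$, hence is \emph{not} a face of $\overline{F_i}$.  The two choices available at step $i$ given a fixed $F_{i+1}$ are not antipodal to each other.  You would need to describe the fibre as a sequence of binary choices (as the paper does) and verify that $\vareps(\FF)b(\FF)$ is constant on the fibre by a slightly more careful bookkeeping of which factors change.

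The second verification is where the real gap lies, and you acknowledge it as ``the main obstacle.''  Your sketch fixes $F_{i+1},\dots,F_p$ and $F_0,\dots,F_{i-1}$ and says ``only the middle factor $(F_i^Y-\overline{F_i^Y})$ depends on $Y$.''  This cannot be arranged in general.  A polyhedral cone is Eulerian, so the interval $[F_{i+1},F_{i-1}]$ in the face poset of $\overline{F_{i-1}}$ contains exactly two codimension-$i$ faces; but the interval $[X_{i-1},X_{i+1}]$ in the geometric lattice $L(\A)$ can contain many more than two subspaces $Y$ (e.g.\ three concurrent lines in the plane already gives three).  So for most $Y$ in the sum, there is no lift $F_i^Y$ compatible with a single fixed pair $(F_{i-1},F_{i+1})$; the tails $F_{i-1}^Y,\dots,F_0^Y$ must also vary with $Y$, and then several factors of $b(\FF^Y)$ change simultaneously.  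The ``direct computation in the face algebra \dots\ should collapse the sum over $Y$'' is therefore not a calculation you have actually set up, and this is precisely the nontrivial content of \cite[Theorem~8]{VG87}.  As it stands the proposal is an outline of a plausible strategy with the hard step missing, whereas the paper sidesteps the issue entirely by citing the known result.
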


There is also a map in the other direction, given by \cite[Theorem 18.3.3]{Va93}:
\begin{proposition}[\cite{Va93}]
For each $p$, the map 
\begin{equation}\label{eq:defphi}
\phi_p\colon\bigoplus_{X\in L_p(\A)}
\Fl_p(L(\A_X))\otimes_R V(\A^X)\to W^pV(\A)
\end{equation}
given by sending $\XX\otimes F$ to $\vareps(\FF)b(\FF)$ is well-defined, where
$\FF$ is any flag with $f(\FF)=(\XX,F)$.  
\end{proposition}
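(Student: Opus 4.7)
The plan is to verify two independence conditions for the assignment $\XX\otimes F\mapsto\vareps(\FF)b(\FF)$ to descend to a well-defined map: (i) the expression depends only on $(\XX,F)=f(\FF)$, not on the chosen lift $\FF$; and (ii) the defining relations of $\Fl_p(L(\A_X))$ are respected. That the image lies in $W^pV(\A)$ is automatic, since the preceding theorem asserts $b(\FF)\in W^pV(\A)$ for every $\FF\in\flag_p(\cF)$. The $R$-linearity in the $V(\A^X)$ factor is built into the tensor product convention, since $V(\A^X)$ is free on $\cC(\A^X)$ and $\phi_p$ is defined on the chamber generators.

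For (i), recall that the fibre of $f$ over $(\XX,F)$ has $2^p$ elements, indexed by iterated binary choices at levels $p-1,p-2,\ldots,0$: given $F_{i+1},\ldots,F_p$, there are exactly two codimension-$i$ faces of support $X_i$ whose closure contains $F_{i+1}$. It suffices to compare two lifts $\FF,\FF'$ differing at a single level $i$, with the lower-index faces adjusted so that both remain valid flags. A direct computation in the face algebra using the sign formula \eqref{eq:vprod} and the deletion property $FGF=FG$ shows that such a flip multiplies $b(\FF)$ by an explicit sign $\eta\in\{\pm1\}$. The orientation convention of \cite{VG87}---parallel zonotope faces carry the same orientation---then yields $\vareps(\FF')/\vareps(\FF)=\eta$ as well: across the one covering relation affected by the flip, neighbouring faces on opposite sides of the separating hyperplane induce opposite boundary orientations, while parallel partners at other levels preserve their $\vareps$ contributions. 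Hence $\vareps(\FF)b(\FF)=\vareps(\FF')b(\FF')$.

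For (ii), fix a chain $X_0\lx\cdots\lx X_{i-1}\lx X_{i+1}\lx\cdots\lx X_p=X$ with a gap at index $0<i<p$, a face $F$ with $\abs{F}=X$, and lifts $\FF_Y$ of the completed chains for each $Y$ with $X_{i-1}<Y<X_{i+1}$. We must show
\[
\sum_{Y\colon X_{i-1}<Y<X_{i+1}}\vareps(\FF_Y)\,b(\FF_Y)=0 \quad\text{in $V(\A)$.}
\]
Applying $\pi_p$ from the preceding theorem sends this sum to the defining relation of $\Fl_p$ and hence to zero, so a priori it lies in $\ker\pi_p=W^{p+1}V(\A)$; the full assertion requires identical vanishing. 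The plan is to localize to a rank-$2$ subquotient. The interval $[X_{i-1},X_{i+1}]\subset L(\A)$ is the intersection lattice of a central rank-$2$ subarrangement, and its atoms are exactly the $Y$'s in the sum. With compatible choices of the lifts $\FF_Y$ and repeated use of the deletion property to peel off the factors of $b(\FF_Y)$ not involving indices $i-1$ or $i$, the identity reduces to the corresponding rank-$2$ assertion: in a central arrangement of $m$ lines through the origin, the signed sum of $2$-flag cochains over all lines vanishes in the face algebra. This rank-$2$ identity can be checked directly by parametrising the $2m$ chambers cyclically and observing that each chamber occurs in exactly two summands with opposite signs.

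The main obstacle is this localization step. The lower-index faces $F_0,\ldots,F_{i-1}$ of $\FF_Y$ depend genuinely on $Y$, so isolating the varying part of $b(\FF_Y)$ demands carefully coordinated lifts and repeated applications of the deletion property; simultaneously, the signs $\vareps(\FF_Y)$ must be tracked so that they match the rank-$2$ sign pattern. Once the rank-$2$ identity has been verified with the correct sign conventions, the proposition follows at once.
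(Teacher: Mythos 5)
The paper does not prove this proposition; it is quoted from Varchenko \cite[Theorem~18.3.3]{Va93} and used as an input to the construction of $\psi_X$. So there is no in-text argument to compare against, and your proposal is necessarily a genuinely different contribution: an attempted direct proof.

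Your outline correctly isolates the two things that need checking --- independence of the chosen lift $\FF$ over $(\XX,F)$, and compatibility with the defining relations of $\Fl_p$ --- and both the $2^p$-fibre description and the rank-$2$ localization are the natural strategies. However, two points would need repair or substantial expansion before this is a proof. First, in part (i), your mechanism is misstated: changing the binary choice at one level forces the lower faces to change as well (e.g.\ $F_j\mapsto\overline{F_j}$ for all $j$ below the flipped level, so that the chain of closures survives), and under the parallel-orientation convention the covering-relation signs $\vareps(\overline{F_j},\overline{F_{j+1}})$ at those untouched-in-spirit levels do \emph{not} stay fixed; rather each of the $b$-factor and $\vareps$-factor acquires a global $(-1)^p$-type sign and the product survives. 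So the claim ``parallel partners at other levels preserve their $\vareps$ contributions'' is the wrong explanation even though the conclusion is correct; this needs the actual orientation bookkeeping. Second, part (ii) is the heart of the matter and is left as an acknowledged obstacle. The proposed factorization peels off only the common left factor $F_p(F_{p-1}-\overline{F_{p-1}})\cdots(F_{i+1}-\overline{F_{i+1}})$; the remaining right-hand piece still has $i+1$ alternating factors all of which vary with $Y$, so the reduction to a two-factor rank-$2$ identity is not a direct consequence of the deletion property. One must coordinate the lifts $\FF_Y$ (for instance by producing $F_{i-1}^Y,\ldots,F_0^Y$ from a single reference face via the face product with $F_i^Y$), verify that the $\vareps$-signs transform compatibly, and only then appeal to the rank-$2$ polygon cancellation. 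As it stands this is a plan rather than a proof, and since the paper itself defers to \cite{Va93}, the cleanest course is to cite the result, or to carry out the rank-$2$ reduction in full with the sign bookkeeping made explicit.
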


\begin{example}[Example~\ref{ex:b}, continued]\label{ex:c}
Orient the faces of the zonotope as shown in Figure~\ref{fig:b}.  For
the flag $\XX=(\R^2,H_1,0)$, the only choice for $F$ is the face $000$.
Picking $\FF=(\px\px\mx\lx0\px\mx\lx000)$, we see $\vareps(\FF)=-1$, and
coordinates of $\phi(\XX\otimes C_{000})$ are as indicated.  The 
simplex corresponding to $\FF$ in the barycentric subdivision of $Z_\A$ is 
shaded in Figure~\ref{fig:b1}.

For $\XX=(\R^2,H_1)$, there are two choices for $F$.  We find
$\phi(\XX\otimes C_{0\mx\px})=C_{\px\mx\px}-C_{\mx\mx\px}$ and
$\phi(\XX\otimes C_{0\px\mx})=C_{\px\px\mx}-C_{\mx\px\mx}$.
\end{example}

\begin{figure}
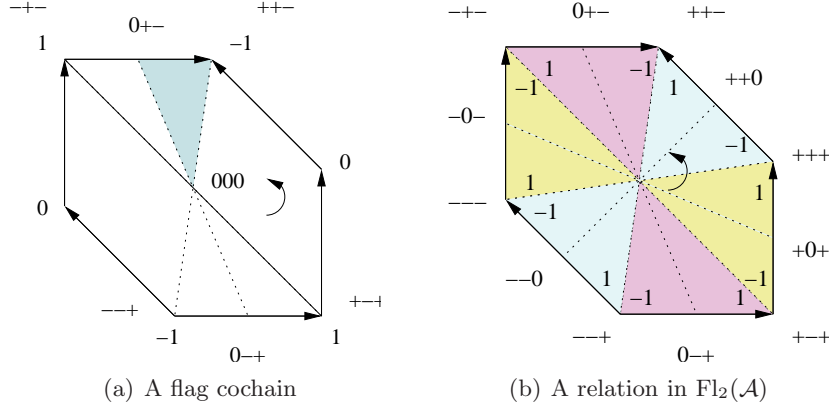

\subfigure[A flag cochain]{\includegraphics[height=1.9in]{zono4.pstex}
\label{fig:b1}}\qquad
\subfigure[A relation in $\Fl_2(\A)$]{\includegraphics[height=1.9in]{zono5.pstex}}
\caption{Flag cochains}\label{fig:b}
\end{figure}
\section{Eigenvectors for the random walk}
\subsection{The face algebra action on flag cochains}
We recall that $A=R[\cF]$ denoted the face algebra.  The domain of 
the map $\phi_p$ in \eqref{eq:defphi} has the structure of a left $A$-module
by the action of $A$ on each $V(\A^X)$.  We see in this section that the
codomain of $\phi_p$ is also an $A$-module (Corollary~\ref{cor:submodule}), and
$\phi_p$ is an $A$-module homomorphism (Lemma~\ref{lem:hom}).

First, consider
applying the face product to each element of a flag $\FF\in\flag_p(\cF)$.
Clearly for each $F\in\cF$, we have $FF_0\leq \cdots\leq FF_p$.
By \cite[Lemma~3.2]{De02}, however, 
this is a flag if and only if $\abs{F}\geq\abs{F_p}$; otherwise, 
$FF_k=FF_{k+1}$ for some $k$ with $0\leq k\leq p-1$.
As usual, we regard the faces $F$ with $\abs{F}\geq\abs{F_p}$
as chambers of $\A^\abs{F_p}$.
In this case, let $F\FF$ denote the flag $FF_0\lx\cdots\lx FF_p$.  
\begin{proposition}\label{prop:dotwithb}
For each $F\in\cF$ and $\FF\in\flag_p(\cF)$, we have
\[
Fb(\FF) = \begin{cases}
b(F\FF) & \text{if $\abs{F}\geq \abs{F_p}$;}\\
0 & \text{otherwise.}
\end{cases}
\]
\end{proposition}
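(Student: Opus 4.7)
My plan is to split into two cases depending on whether $\abs{F}\geq\abs{F_p}$ in $L(\A)$, and in each case to analyze the face product $(FF_p)G_{p-1}\cdots G_0$ one hyperplane at a time. In both cases I would start with $Fb(\FF)=(FF_p)(F_{p-1}-\overline{F_{p-1}})\cdots(F_0-\overline{F_0})$.

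Case 1 ($\abs{F}\geq\abs{F_p}$). The essential identity here is $F_pF=F_p$: at any hyperplane $H$ with $(F_p)_H=0$ one has $F\subseteq\abs{F_p}\subseteq H$, and hence $F_H=0$. Combined with the deletion property $FGF=FG$ this gives $(FF_p)\cdot G=(FF_p)\cdot(FG)$ for any face $G$. An easy induction --- using that every partial product $FF_pG_1\cdots G_i$ inherits the same property, since its set of defining hyperplanes is a subset of that of $F$ --- would let me pull an $F$ past each factor in turn and rewrite
\[
Fb(\FF)=(FF_p)(FF_{p-1}-F\overline{F_{p-1}})\cdots(FF_0-F\overline{F_0}).
\]
I would then replace each $F\overline{F_i}$ with $\overline{FF_i}$; the two faces agree wherever $F_H=0$ and differ only where $F_H\neq 0$, but at any such hyperplane $(FF_p)_H=F_H$ and this sign dominates in the final product. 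The resulting expression is exactly $b(F\FF)$.

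Case 2 ($\abs{F}\not\geq\abs{F_p}$). Here I would prove $Fb(\FF)=0$ by a sign-reversing involution on the expansion
\[
Fb(\FF)=\sum_\epsilon(-1)^{|\epsilon|}(FF_p)G^{\epsilon_{p-1}}_{p-1}\cdots G^{\epsilon_0}_0,\qquad G^+_i=F_i,\ G^-_i=\overline{F_i}.
\]
For a suitable index $k\in\{1,\dots,p\}$, flipping $\epsilon_{k-1}$ reverses the sign $(-1)^{|\epsilon|}$; and a direct inspection shows that the face $(FF_p)G^{\epsilon_{p-1}}_{p-1}\cdots G^{\epsilon_0}_0$ has value at a hyperplane $H$ depending on $\epsilon_{k-1}$ exactly when $F\subseteq H$, $F_k\subseteq H$, and $F_{k-1}\not\subseteq H$. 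If no such $H$ exists, the involution is face-preserving and the terms cancel in pairs.

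The heart of the argument is the combinatorial claim that when $\abs{F}\not\geq\abs{F_p}$ some $k\in\{1,\dots,p\}$ has no such $H$. My plan is to argue by contradiction: if every $k$ admits an $H_k\in\A$ with $F,F_k\subseteq H_k$ and $F_{k-1}\not\subseteq H_k$, then induction on $k$ gives $H_1\cap\cdots\cap H_k=\abs{F_k}$. (It contains $\abs{F_k}$; and assuming $H_1\cap\cdots\cap H_{k-1}=\abs{F_{k-1}}$, the fact that $H_k\cap\abs{F_{k-1}}\subsetneq\abs{F_{k-1}}$ forces its codimension to be exactly $k$.) Taking $k=p$ gives $\abs{F_p}=H_1\cap\cdots\cap H_p\supseteq F$, so $\abs{F}\geq\abs{F_p}$ in the reverse-inclusion order --- contradicting the assumption. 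The codimension count in this inductive step is the main obstacle; once it is in place, the involution delivers $Fb(\FF)=0$.
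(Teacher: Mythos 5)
Your proof is correct, and in Case~2 it follows a genuinely different route from the paper's. For Case~1 (when $\abs{F}\geq\abs{F_p}$), the two arguments are essentially the same: the paper uses the deletion property $FGF=FG$ to rewrite $Fb(\FF)$ as $FF_p(FF_{p-1}-F\overline{F_{p-1}})\cdots(FF_0-F\overline{F_0})$ and then replaces $F\overline{F_i}$ by $\overline{FF_i}$; you do the same, and are in fact a bit more careful, since $F\overline{F_i}$ and $\overline{FF_i}$ genuinely differ on hyperplanes where $F_H\neq 0$ and one must observe (as you do) that this discrepancy is absorbed by the leading factor $FF_p$ in the product. For Case~2 the paper does not argue from scratch: it invokes \cite[Lemma~3.2]{De02} to get $FF_k=FF_{k+1}$ for some $k$, from which $F\overline{F_k}=F\overline{F_{k+1}}$ as well, so two adjacent factors in the rewritten product coincide (or the leading one does), and the algebraic identity $(FF_k-F\overline{F_k})^2=0$, together with $F\overline{F}=F$, kills the product. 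Your alternative is a self-contained sign-reversing involution: you expand over $\epsilon\in\{\pm\}^p$, show that flipping $\epsilon_{k-1}$ changes the resulting face exactly when some $H\in\A$ satisfies $F\subseteq H$, $F_k\subseteq H$, $F_{k-1}\not\subseteq H$, and then prove by a codimension-counting induction that when $\abs{F}\not\geq\abs{F_p}$ some index $k$ admits no such $H$. It is worth noting that your combinatorial condition ``no $H$ with $F,F_k\subseteq H$ and $F_{k-1}\not\subseteq H$'' is precisely equivalent to the equation $FF_{k-1}=FF_k$ from the cited lemma, so your inductive argument effectively re-proves the relevant part of \cite[Lemma~3.2]{De02}. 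The paper's route is terser but relies on an external reference; yours is longer but self-contained, and the involution makes the cancellation mechanism visible term by term rather than through the noncommutative identity $(G-G')^2=0$.
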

\begin{proof}
By the deletion property,
\begin{eqnarray*}
Fb(\FF) &=& FF_p(F_{p-1}-\overline{F_{p-1}})(F_{p-2}-\overline{F_{p-2}})
\cdots(F_0-\overline{F_0})\\
&=&FF_p(FF_{p-1}-F\overline{F_{p-1}})(FF_{p-2}-F\overline{F_{p-2}})
\cdots(FF_0-F\overline{F_0}).\\
\end{eqnarray*}
If $\abs{F}\geq \abs{F_p}$, then $\abs{F}\geq\abs{F_i}$ for each $i$,
so $F\overline{F_i}=\overline{FF_i}$ for each $i$, and we obtain $b(F\FF)$.

Otherwise, $FF_k=FF_{k+1}$ for
some $k$.  Since $F\overline{F}=F$ for any $F$, it is easy to check
that $(FF_k-F\overline{F_k})(FF_k-F\overline{F_k})=0$, from which it follows
that $Fb(\FF)=0$ as well.
\end{proof}
Since $W^pV$ is spanned by vectors $b(\FF)$ for $\FF\in\flag_p(\cF)$, we
see the filtration is compatible with the face product.  That is,
\begin{corollary}\label{cor:submodule}
For $0\leq p\leq \ell$, $W^pV$ is an $A$-submodule of $V$.
\end{corollary}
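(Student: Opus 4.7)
The plan is to combine two facts that are already in hand, with essentially no further work. First, by the Varchenko--Gel\cprime fand result recalled just above Theorem~\ref{th:vg} and its consequences, the flag cochains $\set{b(\FF)\colon \FF\in\flag_p(\cF)}$ span $W^pV$ as an $R$-module. Second, the face algebra $A=R[\cF]$ is $R$-spanned by the faces $F\in\cF$ themselves. Consequently, checking that $A\cdot W^pV\subseteq W^pV$ reduces by $R$-bilinearity to verifying that $Fb(\FF)\in W^pV$ for every face $F\in\cF$ and every flag $\FF\in\flag_p(\cF)$.

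This last point is precisely what Proposition~\ref{prop:dotwithb} asserts: the product $Fb(\FF)$ is either $0$, in which case it trivially lies in $W^pV$, or it equals $b(F\FF)$ with $F\FF$ again a member of $\flag_p(\cF)$, in which case it is one of the given generators of $W^pV$. Extending $R$-linearly in both arguments then gives $A\cdot W^pV\subseteq W^pV$, which is the desired conclusion.

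There is no real obstacle to address at this stage. The genuine technical content --- namely, controlling the face product of $F$ with a flag cochain via the deletion property and observing that the relation $(FF_k-F\overline{F_k})^2=0$ kills the degenerate case where $\abs{F}\not\geq\abs{F_p}$ --- has already been packaged inside Proposition~\ref{prop:dotwithb}. The corollary is simply the formal pairing of that proposition with the spanning statement for $W^pV$.
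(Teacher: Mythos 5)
Your argument is exactly the paper's: the paper derives Corollary~\ref{cor:submodule} in one line by noting that $W^pV$ is spanned by the flag cochains $b(\FF)$ and then appealing to Proposition~\ref{prop:dotwithb} to conclude $Fb(\FF)\in W^pV$. Your write-up just spells out the $R$-bilinearity step a bit more explicitly; there is no difference in substance.
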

\begin{lemma}\label{lem:hom}
The map $\phi$ of \eqref{eq:defphi} is an $A$-module homomorphism.
\end{lemma}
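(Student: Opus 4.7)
The plan is to verify the module homomorphism identity
\[
\phi_p\bigl(F \cdot (\XX \otimes G)\bigr) = F \cdot \phi_p(\XX \otimes G)
\]
directly for arbitrary $F \in \cF$, $X \in L_p(\A)$, $\XX \in \flag_p(L(\A_X))$, and $G \in \cC(\A^X)$. I would fix a flag $\FF \in \flag_p(\cF)$ with $f(\FF) = (\XX, G)$, so that $\phi_p(\XX \otimes G) = \vareps(\FF) b(\FF)$ by definition, and then split on whether $\abs{F} \geq X$ in $L(\A)$.

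If $\abs{F} \not\geq X$, both sides should vanish at once: on the domain, the local action \eqref{eq:localaction} sends $G$ to $0$ in $V(\A^X)$; on the codomain, Proposition~\ref{prop:dotwithb} gives $F b(\FF) = 0$ because $\abs{F_p} = X$. In the main case $\abs{F} \geq X$, a short sign-sequence argument identifies $\abs{FF_i}$ with the smallest element of $L(\A)$ containing both $\abs{F}$ and $X_i$, which is just $X_i$ because $X_i \leq X \leq \abs{F}$ in the lattice. Hence $F\FF$ is again a flag, with $\abs{F\FF} = \XX$ and top face $FG$, representing the fiber $f^{-1}(\XX, FG)$. Combining the definition of $\phi_p$ with Proposition~\ref{prop:dotwithb} then yields
\[
\phi_p\bigl(F \cdot (\XX \otimes G)\bigr) = \vareps(F\FF) b(F\FF), \qquad F \cdot \phi_p(\XX \otimes G) = \vareps(\FF) b(F\FF).
\]

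The desired equality thus reduces to the sign identity $\vareps(F\FF) = \vareps(\FF)$, and this is the step where I expect the main work to lie. I would argue factor by factor in the product defining $\vareps$: since $\abs{F_i} = \abs{FF_i}$ for each $i$, the zonotope face $\widehat{FF_i}$ is a parallel translate of $\hat{F_i}$, so carries the chosen orientation of its parallelism class, and likewise for $\hat{F_{i+1}}$ and $\widehat{FF_{i+1}}$. The key geometric point is that the covering $FF_i \lx FF_{i+1}$ has the same local facet geometry as $F_i \lx F_{i+1}$: because $\abs{F} \geq X_{i+1}$ forces $F_H = 0$ on every hyperplane $H$ containing $|F_{i+1}|$, the set of hyperplanes distinguishing $F_i$ from $F_{i+1}$ (those with $(F_i)_H \neq 0$ but $(F_{i+1})_H = 0$) coincides with the set distinguishing $FF_i$ from $FF_{i+1}$, with the same signs $(FF_i)_H = (F_i)_H$. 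Translation-invariance of the orientations then gives $\vareps(F_i, F_{i+1}) = \vareps(FF_i, FF_{i+1})$, and multiplying over $i$ yields the identity.
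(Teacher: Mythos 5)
Your proof is correct and follows essentially the same route as the paper: fix a flag $\FF$ with $f(\FF)=(\XX,G)$, handle the vanishing case $\abs{F}\not\geq X$ via \eqref{eq:localaction} and Proposition~\ref{prop:dotwithb}, and in the main case reduce to the sign identity $\vareps(F\FF)=\vareps(\FF)$ using the fact that orientations were chosen to agree on parallel zonotope faces. Your sign-sequence computation showing that the covering $FF_i\lx FF_{i+1}$ has the same local facet data as $F_i\lx F_{i+1}$ just spells out the ``translate'' claim that the paper cites from Proposition~\ref{prop:Zprod} and leaves terse.
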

\begin{proof}
For $\XX\otimes F\in \Fl_p(\A_X)\otimes V(\A^X)$, choose a flag $\FF$ with
$\abs{\FF}=\XX$ and $F_p=F$.  Recall that we chose the orientations in $Z_\A$
to agree on parallel faces.  Note the face $\widehat{GF_i}$ is a translate
of $\hat{F_i}$ (Proposition~\ref{prop:Zprod}), for all $0\leq i\leq p$ and
$\abs{G}\geq\abs{F_p}$, so their orientations agree.  It follows that
$\vareps(\FF)=\vareps(G\FF)$ for all faces $G$ with $\abs{G}\geq X$.  So
for $G\in\cC(\A^X)$, 
\begin{eqnarray*}
\phi(G(\XX\otimes F)) &=&\vareps(G\FF) b(G\FF)\\
&=&\vareps(\FF)G b(\FF),\\
&=&G\phi(\XX\otimes F)
\end{eqnarray*}
using Proposition~\ref{prop:dotwithb} at the second step.  On the other
hand, if $\abs{G}\not \geq X$, both sides are zero, by \eqref{eq:localaction} and 
Proposition~\ref{prop:dotwithb}.
\end{proof}
\subsection{The main result}
Now we are able to state and prove a description of each eigenspace of
the BHR random walk's transition matrix $K$.  The main idea is to
use the stationary distribution \eqref{eq:stationary} on each 
the arrangement $\A^X$, for each $X$, together with Lemma~\ref{lem:hom}.

Recall that eigenvalues of $K$ were indexed by subspaces $X\in L(\A)$,
from \eqref{eq:lambda}.
For each $X\in L(\A)$, then let $q^X\in V(\A^X)$ be the eigenvector
\eqref{eq:1eivec} for the arrangement $\A^X$, a vector over the ring $R$: 
by Lemma~\ref{lem:eivec},
\begin{equation}\label{eq:local}
K_{\A^X}\cdot q^X=\lambda_X q^X.
\end{equation}
For each 
$C\in\cC(\A^X)$, let $q^X_C$ denote the $C$th coordinate of $q^X$.

For each $X\in L_p(\A)$, define a map $\psi_X\colon \Fl_p(\A_X)\to
W^pV(\A)\subseteq V(\A)$ by letting
\begin{equation}\label{eq:eigenvector}
\psi_X(\XX)=\sum_{C\in\cC(\A^X)}q^X_C\phi(\XX\otimes C).
\end{equation}
Let $\psi\colon\Fl(\A)\to V$ be given on $\Fl_p(\A)$ 
by composing the isomorphism
\eqref{eq:dualb} with the maps $\psi_X$.

\begin{theorem}\label{th:main}
The map $\psi_X$ is one-to-one, and its image is the eigenspace of 
$K_\A$ with eigenvalue $\lambda_X$.  
\end{theorem}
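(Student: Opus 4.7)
The plan is to identify $\psi_X(\XX) = \phi(\XX \otimes q^X)$, then transport the eigenvalue equation $K_{\A^X} q^X = \lambda_X q^X$ on $\A^X$ (Lemma~\ref{lem:eivec}) across the $A$-module homomorphism $\phi$ (Lemma~\ref{lem:hom}), and finally compare ranks with the multiplicity from Theorem~\ref{th:BHR}.

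For the eigenvalue statement, $K_\A$ acts as left multiplication by $\sum_{F \in \cF} w_F F \in A$, and the $A$-linearity of $\phi$ turns $K_\A \psi_X(\XX)$ into $\phi\bigl(\sum_F w_F F \cdot (\XX \otimes q^X)\bigr)$. On the summand $\Fl_p(\A_X) \otimes V(\A^X)$ the local action \eqref{eq:localaction} annihilates every $F$ with $\abs{F} \not\geq X$, so the inner expression collapses to $\XX \otimes (K_{\A^X} q^X) = \lambda_X (\XX \otimes q^X)$. Thus $K_\A \psi_X(\XX) = \lambda_X \psi_X(\XX)$, and the image of $\psi_X$ is contained in the $\lambda_X$-eigenspace.

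For injectivity, I would compose $\psi_X$ with the projection $\pi_p$. For each $C \in \cC(\A^X)$, pick a flag $\FF$ with $f(\FF) = (\XX, C)$; then $\phi(\XX \otimes C) = \vareps(\FF) b(\FF)$ and $\pi_p(b(\FF)) = \vareps(\FF) \XX$, so $\vareps(\FF)^2 = 1$ yields $\pi_p(\phi(\XX \otimes C)) = \XX$. Consequently
\[
\pi_p(\psi_X(\XX)) = \Bigl(\sum_{C \in \cC(\A^X)} q^X_C\Bigr) \XX.
\]
The scalar $\sum_C q^X_C \in R$ is a unit: for instance, the specialization $w_F \mapsto 1/\abs{\cF(\A^X)}$ makes every term in \eqref{eq:1eivec} strictly positive. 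Since $R$ is a field, multiplication by this nonzero element is injective on the summand $\Fl_p(\A_X)$ of $\Fl_p(\A)$ (via \eqref{eq:dualb}), so $\psi_X$ is one-to-one.

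To finish, note that the polynomials $\lambda_X = \sum_{\abs{F} \geq X} w_F$ separate distinct points of $L(\A)$, so in Theorem~\ref{th:BHR} each eigenvalue is associated to a unique $X$, and its eigenspace has rank $(-1)^p \mu(W,X) = \abs{\mu(W,X)} = \rank \Fl_p(\A_X)$. Combined with injectivity, this forces the image of $\psi_X$ to be the entire $\lambda_X$-eigenspace. The main conceptual obstacle is the injectivity step: observing that $\pi_p \circ \psi_X$ decouples into scalar multiplication by $\sum_C q^X_C$ simultaneously computes the image of $\psi_X$ modulo $W^{p+1}V(\A)$ and reduces one-to-oneness to a single algebraic nonvanishing check, after which the dimension count proceeds automatically.
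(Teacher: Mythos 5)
Your proof is correct and follows the same strategy as the paper's: show $\psi_X(\XX)$ is a $\lambda_X$-eigenvector via the $A$-module homomorphism $\phi$ and the local eigenvalue equation \eqref{eq:local}, then compose with $\pi_p$ to reduce injectivity to the nonvanishing of $\sum_C q^X_C$, and finally invoke the rank count from Theorem~\ref{th:BHR}. The only differences are cosmetic: you apply $\pi_p$ directly on $V(\A)$ (using $\vareps(\FF)^2=1$) rather than first passing through $i^*$ to $V(\A_X)$ as the paper does, and you make explicit the eigenvalue-distinctness over $R$ and the dimension count that the paper leaves implicit.
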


In other words, for each flag $\XX$ ending at $X$,
and each face $F$ with $\abs{F}=X$, choose any flag $\FF\in\flag(\cF)$
with $\abs{\FF}=\XX$, ending at $F$.
Then the vector
\[
\sum_{F\colon\abs{F}=X}\vareps(\FF)q^X_F b(\FF)
\]
is an eigenvector for $K$ with eigenvalue $\lambda_X$.  Moreover, we see now that the 
Varchenko-Gel\cprime fand dual filtration (\S\ref{ss:dualfiltr}) can be used
to keep track of the eigenspace multiplicities (Theorem~\ref{th:BHR}):
\begin{corollary}
The map $\psi$ is an isomorphism, taking the Brieskorn decomposition \eqref{eq:dualb} 
of $\Fl(\A)$ to the eigenspace decomposition of $V$.
\end{corollary}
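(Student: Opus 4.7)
The plan is to verify in turn that each $\psi_X(\XX)$ lies in the $\lambda_X$-eigenspace, that $\psi_X$ is injective, and that the image fills the whole eigenspace by a rank count. The crucial observation is that by linearity of $\phi$ in the second tensor factor one has $\psi_X(\XX)=\phi(\XX\otimes q^X)$, so $\psi_X$ inherits its behaviour directly from the $A$-module homomorphism $\phi$ of Lemma~\ref{lem:hom}.

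For the eigenvector property, I would apply $K_\A=\sum_F w_F F$ to $\phi(\XX\otimes q^X)$, push each $F$ through $\phi$ using Lemma~\ref{lem:hom}, and use \eqref{eq:localaction} to drop all summands with $\abs{F}\not\geq X$. The remaining faces act as the face algebra of $\A^X$ on the second tensor factor, so the sum rearranges to $\phi(\XX\otimes K_{\A^X}q^X)$; Lemma~\ref{lem:eivec} applied to $\A^X$ then gives $K_{\A^X}q^X=\lambda_X q^X$, yielding $\lambda_X\psi_X(\XX)$.

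For injectivity I would compose with the projection $\pi_p\colon W^pV(\A)\to\Fl_p(\A)$ of \eqref{eq:noncanonical}. For each $C\in\cC(\A^X)$, picking a flag $\FF$ with $\abs{\FF}=\XX$ and $F_p=C$ gives $\phi(\XX\otimes C)=\vareps(\FF)b(\FF)$ and $\pi_p(b(\FF))=\vareps(\FF)\XX$, so $\pi_p(\phi(\XX\otimes C))=\XX$, viewed inside $\Fl_p(\A)$ via the embedding \eqref{eq:dualb}. Summing over $C$ produces $\pi_p(\psi_X(\XX))=\bigl(\sum_C q^X_C\bigr)\XX$, so injectivity of $\psi_X$ reduces to the nonvanishing of $\sum_C q^X_C$ in $R$. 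This is the one genuinely delicate step: from \eqref{eq:1eivec} each $q^X_C$ is a sum of products of terms $(\sum_i w_{F_i})^{-1}$ that are strictly positive under any specialization to positive real weights, so $\sum_C q^X_C$ is a nonzero element of $R$.

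Finally, Theorem~\ref{th:BHR} closes the loop: the $\lambda_X$-eigenspace of $K_\A$ has dimension $(-1)^p\mu(W,X)=\abs{\mu(W,X)}$, equal to the rank of $\Fl_p(\A_X)$ by \eqref{eq:dualb}. (For distinct $X,Y\in L(\A)$ the linear forms $\lambda_X$ and $\lambda_Y$ in the indeterminates $w_F$ differ, since one of the subspaces contains a face not contained in the other, so this multiplicity really is the dimension of the eigenspace over $R$.) The injection $\psi_X$ between free $R$-modules of equal rank, with image in the eigenspace, therefore has image equal to the full eigenspace.
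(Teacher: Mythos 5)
Your proposal is correct, and it follows essentially the same route as the paper: the corollary has no independent argument in the source but rests entirely on Theorem~\ref{th:main}, whose proof you reconstruct faithfully --- express $\psi_X(\XX)=\phi(\XX\otimes q^X)$, push $K_\A$ through the $A$-module map $\phi$ of Lemma~\ref{lem:hom} and invoke Lemma~\ref{lem:eivec} for the eigenvector property, then compose with $\pi_p$ to show $\psi_X$ is a nonzero scalar multiple of the embedding $\Fl_p(\A_X)\hookrightarrow\Fl_p(\A)$, hence injective, and close with the dimension count from Theorem~\ref{th:BHR}.

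The one genuine (if modest) variation is in the injectivity step. The paper first applies the restriction $i^*\colon V(\A)\to V(\A_X)$, notes that $i^*\phi(\XX\otimes C)$ is independent of $C$, pulls out the scalar $\sum_C q^X_C$, and only then applies $\pi_p$ for $\A_X$. You instead apply $\pi_p$ for $\A$ directly and compute $\pi_p(\phi(\XX\otimes C))=\vareps(\FF)^2\abs{\FF}=\XX$ for every $C$, which gives the same scalar at once and avoids the detour through $\A_X$. This is slightly more streamlined and uses only \eqref{eq:noncanonical}; the paper's route makes the commuting square with the natural maps of \S\ref{ss:dualfiltr} explicit, which is perhaps more conceptually transparent but not shorter. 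Your explicit remark that the forms $\lambda_X$ are pairwise distinct in the function field $R$ is a worthwhile addition --- it justifies reading the multiplicity in Theorem~\ref{th:BHR} as the genuine eigenspace dimension, a point the paper leaves implicit.
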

\begin{proof}[Proof of Theorem~\ref{th:main}]
First check $\psi_X(\XX)$ is an eigenvector, for any flag $\XX$.  We have
\begin{eqnarray*}
K\cdot\psi(\XX) &=&\sum_{F\in\cF(\A)}w_F F\sum_{C\in\cC(\A^X)}q^X_C
 \phi(\XX\otimes C)\\
&=&\sum_{F\in\cF(\A^X)}w_F \sum_{C\in\cC(\A^X)} q^X_C\phi(\XX\otimes FC)
\\
&=&\lambda_X\psi(\XX),
\end{eqnarray*}
using first Lemma~\ref{lem:hom} then \eqref{eq:local}.

To see that the kernel of $\psi_X$ is zero, consider the two short
exact sequences
\[
\xymatrix{
0\ar[r]& W^{p+1}V(\A)\ar[r]\ar[d]_{i^*} & W^pV(\A)\ar[r]^{\pi_p}\ar[d]_{i^*} & 
\Fl_p(\A)\ar[r]\ar[d] & 0\\
0\ar[r]& W^{p+1}V(\A_X)\ar[r] & W^pV(\A_X)\ar[r]^{\pi_p} & 
\Fl_p(\A_X)\ar[r]\ar@{.>}[ul]_{\psi_X} & 0,\\
}
\]
where the rows are given by \eqref{eq:noncanonical}, and the vertical
maps are the natural ones (\S\ref{ss:dualfiltr}).  Note that
$i^*b(\FF)=i^*b(\FF')$ for any two flags $\FF$ and $\FF'\in \flag_p(\cF(\A_X))$.
It follows that 
$i^*\circ\phi(\XX\otimes C)=i^*\circ\phi(\XX\otimes C')$ for any $C,C'\in\cC(\A^X)$, so
the composite
\begin{eqnarray*}
\pi_p\circ i^*\circ \psi_X(\XX) &=& \pi_p\Big(\big(\sum_{C\in \cC(\A^X)}
q^X_C\big)i^*\phi(\XX\otimes C)\Big)\\
&=& (\sum_{C\in \cC(\A^X)}p_C^X)\XX.
\end{eqnarray*}
Since the vector $q^X\in V(\A^X)$ is a rescaling of a probability 
distribution, 
the sum of its coordinates must be nonzero.  Since our coefficient ring
$R$ is a domain, this means that the composite has zero kernel, so
$\psi_X$ is one-to-one.
\end{proof}
\begin{example}[Example~\ref{ex:c}, continued]\label{ex:d}
Here is a basis of eigenvectors in the case of three lines in the plane
given by Theorem~\ref{th:main}.
The $\lambda_{\R^2}$-eigenvector is provided by $q$ of \eqref{eq:1eivec}:
if we specialize the weights to a probability distribution, recall
$1=\lambda_{\R^2}$, and the eigenvector is the stationary distribution
\eqref{eq:stationary}.

For an arrangement of one point in a line, the vector \eqref{eq:1eivec}
equals 
\begin{equation}\label{eq:onepoint}
\frac{w_\px+w_0+w_\mx}{w_0w_\px w_\mx(w_\px+w_\mx)}\cdot(w_\px,w_\mx),
\end{equation}
ordering the chambers with $\px$ first.  
Let $\XX=(\R^2,H_1)$.  Then $\Phi_{H_1}(\XX)$ is a unit multiple of 
$w_{0\px\mx}\phi(\XX\otimes C_{0\px\mx})
+w_{0\mx\px}\phi(\XX\otimes C_{0\mx\px})$,
as in \eqref{eq:onepoint}.
Using the calculation in Example~\ref{ex:c} gives the vector shown
in Figure~\ref{fig:c}.

We order the basis of $V$ counterclockwise, starting with the chamber
$\px\px\px$.  For clarity, the weights of the codimension-$0$ and 
-$1$ faces are relabelled $w_0,\ldots,w_6$ as shown in Figure~\ref{fig:c}.
Then the eigenvectors given by Theorem~\ref{th:main} are
\[
\begin{array}{c|c|c}
\XX & \Psi(\XX)\text{~proportional to} & \lambda\\ \hline
(\R^2) & p,\text{~above} & \sum_{i=0}^6 w_i\\ \hline
(\R^2,H_1) & (0,w_2,-w_2,0,-w_5,w_5) &
w_0+w_2+w_5\\
(\R^2,H_2) & (w_6,0,w_3,-w_3,0,-w_6) & w_0+w_3+w_6\\
(\R^2,H_3) & (-w_1,w_1,0,w_4,-w_4,0) & w_0+w_1+w_4\\ \hline
(\R^2,H_1,0) & (0,-1,1,0,-1,1) & w_0\\
(\R^2,H_2,0) & (1,0,-1,1,0,-1) & w_0
\end{array}
\]
\end{example}
\begin{figure}
\includegraphics[height=1.9in]{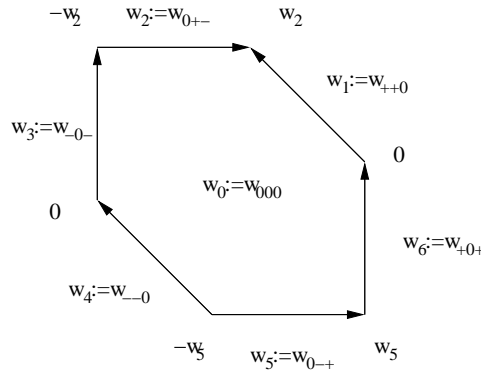}
\caption{$\Psi_{H_1}(\R^2,H_1)$ in Example~\ref{ex:d}}\label{fig:c}
\end{figure}

\begin{remark}
The results of \cite{VG87} on which Theorem~\ref{th:main} is based 
can likely be generalized to arbitrary oriented matroids without
change, in which case the results here would generalize in the same 
way: see the discussion in \cite{BD98}.  

In \cite{Br00}, Brown shows that many of the BHR random walk's properties
(such as diagonalizability) can also be generalized by replacing the face
algebra with any semigroup with the left-regular band property.  The same
paper describes idempotents for irreducible modules, which implicitly
give eigenvectors for the generalized random walk.  However, that description
is somewhat more complicated than the one here.  
Recent work of 
Saliola~\cite{sa10} gives a quite different description of the eigenvectors of 
the BHR random walk, using methods that hold for any left-regular
band.  His construction is complementary to the one given here.  The
cost of working with the general setting of \cite{Br00} is apparently
no longer to have an eigenbasis with such convenient labelling as 
non-broken circuits.  It would be interesting to know, then, if the
main results of \cite{VG87} would admit left-regular band generalizations.
\end{remark}
\begin{ack}
The author would like to thank Phil Hanlon for helpful discussions at the
start of this project, 
and the Institut de G\'eom\'etrie, Alg\`ebre et Topologie at the EPFL
for its hospitality during its completion.
\end{ack}

\bibliographystyle{amsplain}

\def\cprime{$'$}
\providecommand{\bysame}{\leavevmode\hbox to3em{\hrulefill}\thinspace}
\providecommand{\MR}{\relax\ifhmode\unskip\space\fi MR }
\providecommand{\MRhref}[2]{%
  \href{http://www.ams.org/mathscinet-getitem?mr=#1}{#2}
}
\providecommand{\href}[2]{#2}

\end{document}